\newtheorem{theorem}{Theorem}[section]
\newtheorem{lemma}[theorem]{Lemma}
\newtheorem{proposition}[theorem]{Proposition}
\newtheorem{question}{Question}
\theoremstyle{definition}
\newtheorem{example}[theorem]{Example}
\theoremstyle{remark}
\newtheorem{remark}[theorem]{Remark}
\numberwithin{equation}{section}
\def\fnote#1{\footnote}
\def\ignora#1{}
\def\n3#1{\left\vert  \! \left\vert \! \left\vert \, #1 \, \right\vert \!
  \right\vert \! \right\vert }
\newcommand{\pten}{\ensuremath{\widehat{\otimes}_\pi}}
\newcommand{\diam}{\mathop{\mathrm{diam}}\nolimits}
\DeclareMathOperator{\linspan}{span}
\begin{document}

\title{ Daugavet property and separability in Banach spaces }

\author{ Abraham Rueda Zoca }
\address{Universidad de Granada, Facultad de Ciencias.
Departamento de An\'{a}lisis Matem\'{a}tico, 18071-Granada
(Spain) and Instituto de Matem\'aticas de la Universidad de Granada (IEMath-GR)} \thanks{The author was partially supported by Junta de Andaluc\'ia Grants FQM-0199.}\email{ arz0001@correo.ugr.es}
\urladdr{\url{https://arzenglish.wordpress.com}}

\keywords {Daugavet property, Projective tensor product, Separable Banach spaces, $L$-embedded Banach spaces, Thickness}
\subjclass[2010]{Primary 46B04, 46B20; Secondary  46B28}

\maketitle

\begin{abstract}
We give a characterisation of the separable Banach spaces with the Daugavet property which is applied to study the Daugavet property in the projective tensor product of an $L$-embedded space with another non-zero Banach space. The former characterisation also motivates the introduction of two indices related to the Daugavet property and a short study of them.
\end{abstract}

\section{Introduction}
\bigskip

A Banach space $X$ is said to have the Daugavet property if every rank-one operator $T:X\longrightarrow X$ satisfies the equality
\begin{equation}\label{ecuadauga}
\Vert T+I\Vert=1+\Vert T\Vert,
\end{equation}
where $I$ denotes the identity operator. The previous equality is known as \textit{Daugavet equation} because I. Daugavet proved in \cite{dau} that every compact operator on $\mathcal C([0,1])$ satisfies (\ref{ecuadauga}). Since then, a lot of examples of Banach spaces enjoying the Daugavet property have appeared such as $\mathcal C(K)$ for a compact Hausdorff and perfect topological space $K$, $L_1(\mu)$ and $L_\infty(\mu)$ for a non-atomic measure $\mu$ or the space of Lipschitz functions $Lip(M)$ over a metrically convex space $M$ (see \cite{ikw,kssw,shi,wer} and the references therein for details). Moreover, in \cite{kssw} (respectively \cite{shi}) it appeared a characterisation of the Daugavet property in terms of the geometry of the slices (respectively non-empty weakly open subsets) of $B_X$ (see Theorem \ref{caradaugaclasi} for a formal statement). 

In \cite[Section 6]{wer} D. Werner posed as an open problem how the Daugavet property is preserved by injective or projective tensor products. In \cite[Corollary 4.3]{kkw} an example of a two dimensional complex Banach space $Y$ is given such that $L_\infty^\mathbb C([0,1])\pten Y$ fails the Daugavet property (see \cite[Remark 3.13]{llr2} for real counterexamples failing to fulfill much weaker requirements than the Daugavet property). Concerning to positive results we only know the ones of \cite{br} where, making a strong use of the theory of centralizer and function module representation of Banach spaces, the authors proved that the projective tensor product of a Banach space without minimal $L$-summand with another non-zero Banach space has the Daugavet property. However, up the best of our knowledge, the problem of whether the Daugavet property is preserved by projective tensor products from both factors is still open.

Motivated by this problem and by the recent techniques exposed in \cite[Section 4]{llr2} for the analysis of octahedrality in projective tensor products, in Section \ref{sectifuerte} we will introduce a characterisation of the Daugavet property in separable Banach spaces in terms of coverings of weakly open subsets of the unit ball which will be used to prove the two main results of the paper. On the one hand, we prove in Theorem \ref{tensoLembesepa} that, in presence of the metric approximation property, the Daugavet property is preserved from a separable $L$-embedded Banach space by taking projective tensor products. On the other hand, we prove in Proposition \ref{coropredJBW*triple} that the hypothesis of separability can be eliminated whenever we are dealing with preduals of $JBW^*$-triples with the Daugavet property. In Section \ref{secindithick} we introduce, motivated by Lemma \ref{lemthi} and the thickness index introduced by R. Withley in \cite{whit}, two indices which give a quantitative measurement of how far is a Banach space from having the Daugavet property. We will also study the interrelation of these indices with the Daugavet equation and some stability results concerning to $\ell_p$-sums and to inheritance to subspaces. We finish in Section \ref{secopeque} with some remarks and open questions.
 
\section{Notation and preliminaries}
\bigskip

 We will consider only real Banach spaces. Given a Banach space $X$, we will denote the unit ball and the unit sphere of $X$ by $B_X$ and $S_X$ respectively. Moreover, given $x\in X$ and $r>0$, we will denote $B(x,r)=x+rB_X=\{y\in X:\Vert x-y\Vert\leq r\}$. We will also denote by $X^*$ the topological dual of $X$. Given a bounded subset $C$ of $X$, we will mean by a \textit{slice of $C$} a set of the following form
$$S(C,x^*,\alpha):=\{x\in C:x^*(x)>\sup x^*(C)-\alpha\}$$
where $x^*\in X^*$ and $\alpha>0$. If $X$ is a dual Banach space, the previous set will be a \textit{$w^*$-slice} if $x^*$ belongs to the predual of $X$. Note that finite intersections of slices of $C$ (respectively of $w^*$-slices of $C$) form a basis for the inherited weak (respectively weak-star) topology of $C$.

According to \cite{hww}, a Banach space $X$ is said to be an \textit{$L$-embedded Banach space} if there exists a subspace $Z$ of $X^{**}$ such that $X^{**}=X\oplus_1 Z$. Examples of $L$-embedded Banach spaces are $L_1(\mu)$ spaces, preduals of von Neumann algebras, duals of $M$-embedded spaces or the dual of the disk algebra (see \cite[Example IV.1.1]{hww} for formal definitions and details). 

Given two Banach spaces $X$ and $Y$ we will denote by $L(X,Y)$ the space of all linear and bounded operators from $X$ to $Y$, and we will denote by $X\pten Y$ the projective tensor product of $X$ and $Y$. Moreover, we will say that $X$ has the \textit{metric approximation property} if there exists a net of compact operators $S_\alpha:X\longrightarrow X$ such that $S_\alpha(x)\rightarrow x$ for all $x\in X$. See \cite{rya} for a detailed treatment of the tensor product theory and approximation properties.

The theory of \textit{almost isometric ideals} will be an essential tool for our results related to the Daugavet property in tensor product spaces. Let $Z$ be a subspace of a Banach space $X$.
We say that $Z$ is an \emph{almost isometric ideal} (ai-ideal) in $X$ if
$X$ is locally complemented in $Z$ by almost isometries.
This means that, for each $\varepsilon>0$ and for each
finite-dimensional subspace $E\subseteq X$, there exists a linear
operator $T:E\to Z$ satisfying
\begin{enumerate}
\item\label{item:ai-1}
  $T(e)=e$ for each $e\in E\cap Z$, and
\item\label{item:ai-2}
  $(1-\varepsilon) \Vert e \Vert \leq \Vert T(e)\Vert\leq
  (1+\varepsilon) \Vert e \Vert$
  for each $e\in E$,
\end{enumerate}
i.e. $T$ is a $(1+\varepsilon)$ isometry fixing the elements of $E$.
If the $T$'s satisfy only (\ref{item:ai-1}) and the right-hand side of
(\ref{item:ai-2}) we get the well-known
concept of $Z$ being an \emph{ideal} in $X$ \cite{gks}. Note that the Principle of Local Reflexivity means that $X$ is an ai-ideal in $X^{**}$
for every Banach space $X$. Moreover, the Daugavet property is inherited by ai-ideals
(see \cite{aln2}).  It is also known that, given two Banach spaces $X$ and $Y$ and given an ideal $Z$ in $X$ then $Z\pten Y$ is a closed subspace of $X\pten Y$ (see. e.g. \cite[Theorem 1]{rao}). It is also known that whenever $X^{**}$ or $Y$ has the metric approximation property then $X^{**}\pten Y$ is an isometric subspace of $(X\pten Y)^{**}$ (see \cite[Proposition 2.3]{llr2} and \cite[Theorem 1]{rao}). These two facts will be freely used throughout the Sections \ref{sectifuerte} and \ref{secopeque}. We will also use the following characterisation of ideal in Banach spaces. See \cite[Theorem 1.1]{aln2} and references therein for details.
\begin{theorem}\label{caraidehbope}
Let $X$ be a Banach space and $Y$ be a subspace of $X$. The following assertions are equivalent:
\begin{enumerate}
\item $Y$ is an ideal in $X$.
\item There exists a Hahn-Banach extension operator, that is, an operator $\varphi:Y^*\longrightarrow X^*$ such that, for every $y^*\in Y^*$ and $y\in Y$, it follows that $\Vert \varphi(y^*)\Vert=\Vert y^*\Vert$ and that $\varphi(y^*)(y)=y^*(y)$.
\end{enumerate}
\end{theorem}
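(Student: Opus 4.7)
The plan is to handle the two implications separately, using the auxiliary idea that both are equivalent to the existence of a norm-one projection $P : X^* \to X^*$ with $\ker P = Y^\perp$. For (2) $\Rightarrow$ (1), given the Hahn-Banach extension operator $\varphi$, I would define $P(x^*) := \varphi(x^*|_Y)$ for $x^* \in X^*$. Linearity is immediate, $\Vert P(x^*)\Vert = \Vert x^*|_Y\Vert \leq \Vert x^*\Vert$ follows from the isometric property of $\varphi$, and $P$ is a projection because $\varphi(y^*)|_Y = y^*$ by the extension property; moreover $\ker P$ coincides with $Y^\perp$, again because $\Vert\varphi(\cdot)\Vert = \Vert\cdot\Vert$. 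Passing to the biadjoint, $P^* : X^{**} \to X^{**}$ is a norm-one projection whose range is $Y^{\perp\perp}$; under the canonical embedding this is just $Y^{**}$, and $P^*$ restricts to the identity on $Y \subseteq X^{**}$. To extract the local ideal condition, fix a finite-dimensional $E \subseteq X$ and $\varepsilon > 0$, consider the finite-dimensional subspace $P^*(E) \subseteq Y^{**}$, and apply the Principle of Local Reflexivity inside $Y$ to produce a map $S : P^*(E) \to Y$ with $S|_{P^*(E) \cap Y} = \mathrm{id}$ and $\Vert S\Vert \leq 1 + \varepsilon$. The composition $T := S \circ P^*|_E : E \to Y$ then has norm at most $1 + \varepsilon$ and fixes $E \cap Y$, since for $e \in E \cap Y$ one has $P^*(e) = e$ and hence $S(P^*(e)) = e$.

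For (1) $\Rightarrow$ (2), the idea is to glue the local operators $T_{E,\varepsilon}$ provided by (1) into a single global linear extension via an ultrafilter limit. Let $\mathcal{F}$ denote the directed set of pairs $(E,\varepsilon)$ with $E \subseteq X$ finite-dimensional and $\varepsilon > 0$, ordered by inclusion of $E$ and reverse order of $\varepsilon$, and fix a cofinal ultrafilter $\mathcal{U}$ on $\mathcal{F}$. For each $y^* \in Y^*$ and each $x \in X$, the scalar
\begin{equation*}
\varphi(y^*)(x) := \lim_{\mathcal{U}} y^*\bigl(T_{E,\varepsilon}(x)\bigr)
\end{equation*}
is well-defined, because once $E$ contains $x$ the quantity $y^*(T_{E,\varepsilon}(x))$ is bounded by $(1+\varepsilon)\Vert y^*\Vert \Vert x\Vert$ and so lies in a compact interval. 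Linearity of $\varphi(y^*)$ on $X$ follows from the linearity of each $T_{E,\varepsilon}$ together with the linearity of $y^*$, both of which commute with the ultralimit; the same argument yields linearity of $\varphi$ in $y^*$. The bound $\Vert\varphi(y^*)\Vert \leq \Vert y^*\Vert$ follows by sending $\varepsilon \to 0$ along $\mathcal{U}$, while for $y \in Y$ the identity $T_{E,\varepsilon}(y) = y$ holds once $y \in E$, so that $\varphi(y^*)(y) = y^*(y)$; the latter also delivers the reverse norm inequality.

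The main subtlety lies in the (1) $\Rightarrow$ (2) direction, where the linearity of the resulting operator $\varphi$ is not automatic from a pointwise Hahn-Banach extension of each $y^* \circ T_{E,\varepsilon}$ from $E$ to $X$. The fix is to define $\varphi(y^*)$ directly by the ultralimit displayed above rather than by extending functionals step by step, exploiting the fact that $T_{E,\varepsilon}$ is already linear on $E$ so that linearity in both $x$ and $y^*$ survives the limit. The (2) $\Rightarrow$ (1) direction is, by contrast, essentially a dual repackaging of $\varphi$ into a norm-one projection, together with one standard invocation of the Principle of Local Reflexivity inside $Y$.
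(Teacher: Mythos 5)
The paper does not prove this theorem at all: it is quoted directly from \cite[Theorem 1.1]{aln2}, so there is no internal proof to compare against. Your argument is correct and is essentially the standard one from that literature --- the ultrafilter limit of the local operators $T_{E,\varepsilon}$ for (1)~$\Rightarrow$~(2), and the norm-one projection $x^*\mapsto\varphi(x^*|_Y)$ on $X^*$ dualised and combined with the Principle of Local Reflexivity in $Y$ for (2)~$\Rightarrow$~(1) --- with no gaps (note that the paper's definition of ideal only demands the upper norm bound on $T$, which your composition $S\circ P^*|_E$ indeed satisfies).
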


Let $X$ be a Banach space. In \cite{whit} R. Whitley defined the following \textit{thickness} index
$$T_W(X):=\inf\left\{r>0:\exists \{x_1,\ldots, x_n\}\subseteq S_X\mbox{ with }S_X\subseteq \bigcup\limits_{i=1}^n B(x_i,r)\right\}.$$
In \cite{cps} it was proved that $T_W(X)$ is equal to 
$$T(X):=\inf\left\{r>0: \exists \{x_1,\ldots, x_n\}\subseteq S_X\mbox{ with } B_X\subseteq \bigcup\limits_{i=1}^n B(x_i,r) \right\}$$
whenever $X$ is infinite-dimensional. Moreover, it is known that $1\leq T(X)\leq 2$ whenever $X$ is an infinite-dimensional Banach space \cite[Lemma 2]{whit}.

Related to the thickness index in Banach spaces is the concept of \textit{octahedral norms}. According to \cite{god}, a Banach space $X$ has an octahedral norm if, for every finite-dimensional subspace $Y\subseteq X$ and every $\varepsilon>0$, there exists $x\in S_X$ such that
$$\Vert y+\lambda x\Vert\geq (1-\varepsilon)(\Vert y\Vert+\vert \lambda\vert)$$
holds for every $\lambda\in\mathbb R$ and every $y\in Y$. It is known \cite{god} that $X$ has an octahedral norm if, and only if, $T(X)=2$. If $X$ is additionally separable, it is known \cite[Lemma 9.1]{gk} that $X$ has an octahedral norm if, and only if, there exists $u\in S_{X^{**}}$ such that
$$\Vert x+u\Vert=1+\Vert x\Vert$$
holds for every $x\in X$.

Finally we shall state the following characterisation of the Daugavet property, proved in \cite{kssw} and \cite{shi}, will be freely used throughout the text.

\begin{theorem}\label{caradaugaclasi}
Let $X$ be a Banach space. The following assertions are equivalent:
\begin{enumerate}
\item $X$ has the Daugavet property.

\item For every $x\in S_X$, every $\varepsilon>0$ and every slice $S$ of $B_X$ there exists $y\in S$ such that $\Vert x+y\Vert>2-\varepsilon$.

\item For every $x\in S_X$, every $\varepsilon>0$ and every non-empty weakly open subset $W$ of $B_X$ there exists $y\in W$ such that $\Vert x+y\Vert>2-\varepsilon$.

\item For every $x^*\in S_{X^*}$, every $\varepsilon>0$ and every $w^*$-slice $S$ of $B_{X^*}$ there exists $y^*\in S$ such that $\Vert x^*+y^*\Vert>2-\varepsilon$.

\item For every $x^*\in S_{X^*}$, every $\varepsilon>0$ and every non-empty weakly-star open subset $W$ of $B_{X^*}$ there exists $y^*\in W$ such that $\Vert x^*+y^*\Vert>2-\varepsilon$.
\end{enumerate}
\end{theorem}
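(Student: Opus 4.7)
The equivalences split into two blocks: the primal block $(1)\!\iff\!(2)\!\iff\!(3)$ and the dual block $(1)\!\iff\!(4)\!\iff\!(5)$. In each block, the bridge between the Daugavet equation and the geometric statement is the same elementary trick: given a slice cut by a functional $x^*$ and a direction $x$, form the rank-one operator $T:=x^*\otimes x$, compute $\Vert I+T\Vert$ via \eqref{ecuadauga}, and read the slice/norm estimate off the triangle inequality. The genuinely non-trivial step is enlarging slices to weakly open sets, which is handled by Bourgain's lemma.

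For $(1)\Rightarrow(2)$, take $x\in S_X$, $\varepsilon\in(0,\alpha)$ and a slice $S(B_X,x^*,\alpha)$ with $\Vert x^*\Vert=1$. Set $T(z):=x^*(z)x$; then $\Vert T\Vert=1$, so by $(1)$ we may choose $z\in B_X$ with $\Vert z+x^*(z)x\Vert>2-\varepsilon$. The estimate $\Vert z\Vert+|x^*(z)|\geq \Vert z+x^*(z)x\Vert>2-\varepsilon$ forces $|x^*(z)|>1-\varepsilon$; after possibly replacing $z$ by $-z$, the vector $z$ lies in the slice, and
\[\Vert x+z\Vert\geq \Vert z+x^*(z)x\Vert-(1-x^*(z))\Vert x\Vert>2-2\varepsilon.\]
The converse $(2)\Rightarrow(1)$ reverses this computation: given a rank-one $T=x^*\otimes x_0$ with $\Vert x^*\Vert=\Vert x_0\Vert=1$, apply $(2)$ with $x_0$ and the slice $S(B_X,x^*,\varepsilon)$ to obtain $y$ such that $\Vert y+Ty\Vert=\Vert y+x^*(y)x_0\Vert\geq \Vert y+x_0\Vert-(1-x^*(y))>2-2\varepsilon$.

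For $(2)\Leftrightarrow(3)$, the implication $(3)\Rightarrow(2)$ is immediate. The reverse direction is the expected obstacle: weakly open subsets of $B_X$ are much larger than slices, so $(2)$ by itself is not enough. The standard route uses Bourgain's lemma, which guarantees that every non-empty relatively weakly open subset $W$ of $B_X$ contains a convex combination $\sum_{i=1}^n\lambda_i y_i$ with each $y_i$ drawn from a slice $S_i$ of $B_X$. Applying $(2)$ to $x\in S_X$ and each slice $S_i$ (with a uniform $\varepsilon$), one may in fact arrange $y_i\in S_i$ with $\Vert x+y_i\Vert>2-\varepsilon$; a compactness/perturbation argument then produces a convex combination lying in $W$ and satisfying the same lower bound by convexity of $z\mapsto\Vert x+z\Vert$.

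Finally, for the dual block, the same rank-one trick works but is read in $X^*$: the adjoint of $T=x^*\otimes x_0$ is the rank-one operator $y^*\mapsto y^*(x_0)x^*$ on $X^*$, and $\Vert I+T\Vert=\Vert I^*+T^*\Vert$. Running the triangle-inequality argument of the first paragraph on $X^*$ with $w^*$-slices of the form $\{y^*\in B_{X^*}:y^*(x_0)>1-\alpha\}$ yields $(1)\Leftrightarrow(4)$, and the passage from $w^*$-slices to $w^*$-open subsets is the $w^*$-analogue of Bourgain's lemma (Choquet's lemma / the fact that every non-empty relatively $w^*$-open subset of $B_{X^*}$ contains a convex combination of $w^*$-slices), giving $(4)\Leftrightarrow(5)$. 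Thus the only substantive geometric input is the convex-combination-of-slices lemma, the main obstacle flagged above.
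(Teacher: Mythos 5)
The paper does not actually prove this theorem; it is quoted from \cite{kssw} and \cite{shi}, so the only meaningful comparison is with the arguments there. Your rank-one computation for $(1)\Leftrightarrow(2)$ and its transposition to $X^*$ for $(1)\Leftrightarrow(4)$ are correct and are exactly the standard ones (modulo the small omission that $(2)\Rightarrow(1)$ is only checked for rank-one operators of norm one; the reduction from arbitrary rank-one operators is a routine convexity lemma, but it should be recorded).

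The genuine gap is in $(2)\Rightarrow(3)$, precisely the step you flag as the crux. After Bourgain's lemma gives slices $S_1,\dots,S_n$ and weights $\lambda_i$ with $\sum_i\lambda_i S_i\subseteq W$, choosing $y_i\in S_i$ with $\Vert x+y_i\Vert>2-\varepsilon$ for each $i$ separately does \emph{not} yield $\Vert x+\sum_i\lambda_i y_i\Vert>2-\varepsilon$: convexity of $z\mapsto\Vert x+z\Vert$ gives the \emph{upper} bound $\Vert\sum_i\lambda_i(x+y_i)\Vert\leq\sum_i\lambda_i\Vert x+y_i\Vert$, and there is no lower bound of this kind, since the vectors $x+y_i$ may point in different directions and cancel. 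Concretely, in $\ell_\infty^2$ take $x=(1,1)$, $y_1=(1,-1)$, $y_2=(-1,1)$: then $\Vert x+y_1\Vert=\Vert x+y_2\Vert=2$ while $\Vert x+\tfrac12(y_1+y_2)\Vert=1$. The missing ingredient is the localization lemma \cite[Lemma 2.8]{kssw} (used implicitly by this paper in the proof of its Lemma \ref{lemthi}): if $X$ satisfies $(2)$, then inside every slice $S$ there is a \emph{further slice} $S'\subseteq S$ on which $\Vert x+y\Vert>2-\varepsilon$ holds for \emph{every} $y\in S'$. Iterating this, one chooses $y_1,\dots,y_n$ successively so that a single $f\in S_{X^*}$ satisfies $f(x)>1-\varepsilon$ and $f(y_i)>1-\varepsilon$ for all $i$; only then does $f\bigl(x+\sum_i\lambda_i y_i\bigr)>2-2\varepsilon$ give the required lower bound. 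The same defect, and the same repair, apply to your treatment of $(4)\Rightarrow(5)$ via the $w^*$-version of Bourgain's lemma.
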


Note that the previous theorem implies that Banach spaces with the Daugavet property have an octahedral norm (see \cite[Lemma 2.8]{kssw} for details).

\section{The Daugavet property in separable Banach spaces and applications}\label{sectifuerte}

\bigskip

As we have pointed out in the previous section, a Banach space $X$ has an octahedral norm if, and only if, whenever there exist $x_1,\ldots, x_n\in X$ such that $B_X\subseteq \bigcup\limits_{i=1}^n B(x_i,r_i)$ then there exists $i\in\{1,\ldots, n\}$ such that $B_X\subseteq B(x_i,r_i)$. We wonder whether a similar statement can be established for the Daugavet property. The following lemma will characterise the above property in terms of a ``thickness kind" condition. In order to see that, we shall introduce a bit of notation. According to \cite{gk}, given a Banach space $X$, it is defined the \textit{ball topology}, and denoted by $b_X$, as the coarsest topology on $X$ so that every closed ball is closed in $b_X$. As a consequence, a basis for the topology $b_X$ is formed by the sets of the following form
$$X\setminus\cup_{i=1}^n B(x_i,r_i),$$
where $x_1,\ldots, x_n$ are elements of $X$ and $r_1,\ldots, r_n$ are positive numbers.

\begin{lemma}\label{lemthi}
Let $X$ be a Banach space. The following assertions are equivalent:
\begin{enumerate}
\item\label{lemthi1} $X$ has the Daugavet property.

\item\label{lemthi2}  Given a non-empty weakly open set $W$ of $B_X$ it follows that, whenever there exist $x_1,\ldots, x_n\in X$ such that $W\subseteq \bigcup\limits_{i=1}^n B(x_i,r_i)$ then there exists $i\in\{1,\ldots, n\}$ such that $r_i\geq 1+\Vert x_i\Vert$. In particular, $W\subseteq B_X\subseteq B(x_i,r_i)$.
\item\label{lemthi3}  For every non-empty $b_X$ open subset $O$ of $B_X$ and for every non-empty weakly open subset $W$ of $B_X$ it follows that $W\cap O\neq \emptyset$.
\end{enumerate}
\end{lemma}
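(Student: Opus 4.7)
The plan is to close the cycle $(1) \Rightarrow (2) \Rightarrow (3) \Rightarrow (1)$, with almost all the work in the first step. For $(3) \Rightarrow (1)$: given $x \in S_X$, $\varepsilon > 0$, and a non-empty weakly open $W \subseteq B_X$, the set $O := B_X \setminus \overline{B(-x, 2-\varepsilon)}$ is $b_X$-open and non-empty (it contains $x$), so (3) produces $y \in W \cap O$, which is precisely the content of Theorem~\ref{caradaugaclasi}(3). For $(2) \Rightarrow (3)$: by contrapositive, $W \cap O = \emptyset$ with $O = B_X \setminus \bigcup_i B(x_i, r_i)$ forces $W \subseteq \bigcup_i B(x_i, r_i)$, so (2) gives some $B(x_i, r_i) \supseteq B_X$, contradicting $O \neq \emptyset$.

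The bulk of the proof is $(1) \Rightarrow (2)$. I first observe an iterated version of Theorem~\ref{caradaugaclasi}(3): for every $u_1,\dots,u_n \in S_X$, every $\varepsilon > 0$, and every non-empty weakly open $W \subseteq B_X$, there exists $y \in W$ with $\|y + u_i\| > 2 - \varepsilon$ for all $i$ simultaneously. This comes from an easy induction: each set $\{y \in B_X : \|y + u_i\| > 2 - \varepsilon\}$ is the complement in $B_X$ of the weakly closed ball $\overline{B(-u_i, 2 - \varepsilon)}$, so the successive intersections remain weakly open, and Theorem~\ref{caradaugaclasi}(3) keeps them non-empty.

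To derive (2), assume $W \subseteq \bigcup_{i=1}^n B(x_i, r_i)$ with $r_i < 1 + \|x_i\|$ for every $i$, and pick $\varepsilon > 0$ so small that $(1-\varepsilon)(1 + \|x_i\|) > r_i$ for all $i$. Setting $u_i := -x_i/\|x_i\|$ when $x_i \neq 0$ (any unit vector otherwise), the iterated property gives $y \in W$ with $\|y + u_i\| > 2 - \varepsilon$ for every $i$. For each $i$, take $g_i \in S_{X^*}$ with $g_i(y + u_i) = \|y + u_i\|$ by Hahn-Banach; since $g_i(y), g_i(u_i) \leq 1$ and their sum exceeds $2 - \varepsilon$, each exceeds $1 - \varepsilon$. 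Hence $g_i(x_i) = -\|x_i\| g_i(u_i) < -\|x_i\|(1-\varepsilon)$, and so
\[
\|y - x_i\| \geq g_i(y) - g_i(x_i) > (1 - \varepsilon)(1 + \|x_i\|) > r_i
\]
for every $i$, contradicting $y \in B(x_j, r_j)$ for some $j$.

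The delicate step is this Hahn-Banach manoeuvre. A direct triangle estimate from $\|y - x_i/\|x_i\|\| > 2 - \varepsilon$ only yields $\|y - x_i\| > 3 - \|x_i\| - \varepsilon$, which is useless once $\|x_i\| > 1$. The gain comes from the single functional $g_i$ norming $y + u_i$ carrying two pieces of simultaneous information, $g_i(y) \approx 1$ and $g_i(u_i) \approx 1$ (equivalently $g_i(x_i) \approx -\|x_i\|$), which pair inside $g_i(y - x_i)$ to deliver the sharp bound $(1-\varepsilon)(1 + \|x_i\|)$.
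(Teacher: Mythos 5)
Your proof is correct and follows essentially the same route as the paper: the implications $(2)\Rightarrow(3)$ and $(3)\Rightarrow(1)$ are identical, and for $(1)\Rightarrow(2)$ the paper simply invokes ``a similar argument to the one of \cite[Lemma 2.8]{kssw} for weakly open sets'' to produce $y\in W$ with $\Vert y-x_i\Vert>1+\Vert x_i\Vert-\varepsilon$ for all $i$, which is exactly the iterated-intersection-plus-norming-functional argument you spell out. Your version is just a fully detailed rendering of the step the paper delegates to that citation (and it handles the case $x_i=0$ more cleanly than the paper's remark that one may assume all $x_i$ are non-zero).
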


\begin{proof}
(\ref{lemthi1})$\Rightarrow$(\ref{lemthi2}). Pick a non-empty weakly open subset $W$ of $B_X$ and assume that $W\subseteq \bigcup\limits_{i=1}^n B(x_i,r_i)$ for certain $x_1,\ldots, x_n\in X$ and $r_1,\ldots, r_n\in \mathbb R^+$. Let us prove that there exists $i\in\{1,\ldots, n\}$ such that $r_i\geq 1+\Vert x_i\Vert$. Up considering a smaller weakly open set if necessary we can assume that $0\notin W$ and thus $x_1,\ldots, x_n$ are non-zero. Since $X$ has the Daugavet property we conclude, using a similar argument to the one of \cite[Lemma 2.8]{kssw} for weakly open sets, the existence of $y\in W$ such that
$$\Vert x_i-y\Vert>1+\Vert x_i\Vert-\varepsilon$$
holds for every $i\in\{1,\ldots, n\}$. As $y\in W$ then there exists $i\in\{1,\ldots, n\}$ such that $y\in B(x_i,r)$ and thus $r_i\geq 1+\Vert x_i\Vert-\varepsilon$. Since $\varepsilon>0$ was arbitrary it is not difficult to get (\ref{lemthi2}).

(\ref{lemthi2})$\Rightarrow$(\ref{lemthi3}). Consider $O$ to be a non-empty $b_X$ open subset of $B_X$. Up considering a smaller open set, we can assume that $O$ has the following form
$$O:=B_X\setminus\cup_{i=1}^n B(x_i,r_i),$$
for certain $x_1,\ldots, x_n\in X$ and $r_1,\ldots, r_n\in\mathbb R^+$. Consider $W$ to be a non-empty relatively weakly open subset of $B_X$ and assume, by contradiction, that $O\cap W=\emptyset$. Then $W\subseteq \bigcup\limits_{i=1}^n B(x_i,r_i)$. By (\ref{lemthi2}) we get that $B_X\subseteq \bigcup\limits_{i=1}^n B(x_i, r_i)$ and, consequently, $O=\emptyset$, a contradiction. So (\ref{lemthi3}) follows.

(\ref{lemthi3})$\Rightarrow$(\ref{lemthi1}). Pick $x\in S_X$, $\varepsilon>0$ and a slice $S$ of $B_X$. Define $O:=B_X\setminus B(x,2-\varepsilon)$, which is clearly a non-empty $b_X$ open subset of $B_X$. By (\ref{lemthi3}) there exists $y\in S\cap O$, that is, there exists $y\in S$ such that $\Vert y-x\Vert>2-\varepsilon$. Consequently, $X$ has the Daugavet property, so we are done.
\end{proof}

As well as happen with the octahedrality condition, the previous lemma allows us to strengthen the Daugavet property under separability assumptions. 

\begin{theorem}\label{teodaugabidusepa}
Let $X$ be a separable Banach space. The following assertions are equivalent:
\begin{enumerate}
\item\label{teodaugabidusepa1} $X$ has the Daugavet property, that is, for every $x\in S_X$, every non-empty relatively weakly open subset of $B_X$ and every $\varepsilon>0$ there exists $y\in W$ such that $\Vert x+y\Vert>2-\varepsilon$.

\item\label{teodaugabidusepa2} For every non-empty relatively weakly-star open subset $W$ of $B_{X^{**}}$ there exists $u\in S_{X^{**}}\cap W$ such that 
$$\Vert x+u\Vert=1+\Vert x\Vert$$
holds for every $x\in X$.
\end{enumerate}
\end{theorem}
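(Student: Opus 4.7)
My plan is to prove each implication separately, with the forward direction essentially routine and the converse requiring a Baire-category construction in the bidual.

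For (\ref{teodaugabidusepa2})$\Rightarrow$(\ref{teodaugabidusepa1}), given $x\in S_X$, a non-empty relatively weakly open subset $W$ of $B_X$, and $\varepsilon>0$, I would write $W=B_X\cap \widetilde W$ for some $w^*$-open $\widetilde W\subseteq B_{X^{**}}$ (using that the weak topology on $X$ is the restriction of the $w^*$-topology of $X^{**}$). Applying (\ref{teodaugabidusepa2}) to $\widetilde W$ produces $u\in S_{X^{**}}\cap \widetilde W$ with $\Vert x+u\Vert=2$. Picking $f\in S_{X^*}$ with $f(x+u)>2-\varepsilon/2$ and using Goldstine's theorem to find a net $(y_\alpha)\subseteq B_X$ with $y_\alpha\to u$ in $w^*$, I would note that eventually $y_\alpha\in \widetilde W\cap B_X=W$ and $f(x+y_\alpha)>2-\varepsilon$, which yields $\Vert x+y_\alpha\Vert>2-\varepsilon$.

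The main direction (\ref{teodaugabidusepa1})$\Rightarrow$(\ref{teodaugabidusepa2}) calls for a Baire-category argument in the compact Hausdorff space $(B_{X^{**}},w^*)$. Using separability, I would fix a countable sequence $\{x_n\}_{n\in\mathbb N}$ of non-zero vectors dense in $X$ and, for each $n,m\in\mathbb N$, set
$$U_{n,m}:=\{u\in B_{X^{**}}:\Vert x_n+u\Vert>1+\Vert x_n\Vert-1/m\}.$$
Each $U_{n,m}$ is $w^*$-open because the bidual norm is $w^*$-lower semicontinuous. The central claim is that each $U_{n,m}$ is also $w^*$-dense in $B_{X^{**}}$; granting this, the Baire category theorem yields that $\bigcap_{n,m}U_{n,m}$ is $w^*$-dense and hence meets the given $W$. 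For any $u$ in the intersection, $\Vert x_n+u\Vert\geq 1+\Vert x_n\Vert$ holds for every $n$; this extends to all $x\in X$ by density and continuity of the norm, the reverse inequality follows from $\Vert u\Vert\leq 1$ and the triangle inequality, and picking any $x_n$ forces $\Vert u\Vert\geq \Vert x_n+u\Vert-\Vert x_n\Vert\geq 1$, so $u\in S_{X^{**}}$.

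The hard part will be verifying the density claim. Given a non-empty $w^*$-open $V\subseteq B_{X^{**}}$, Goldstine's theorem ensures $V\cap B_X$ is non-empty and relatively weakly open in $B_X$. I would then apply Theorem \ref{caradaugaclasi}(3) to $x_n/\Vert x_n\Vert\in S_X$ and $V\cap B_X$, obtaining $y\in V\cap B_X$ and a norming functional $f\in S_{X^*}$ witnessing $f(x_n/\Vert x_n\Vert)+f(y)>2-\varepsilon'$. Since both summands are at most $1$, each must exceed $1-\varepsilon'$; multiplying through by $\Vert x_n\Vert$ and adding yields $\Vert x_n+y\Vert\geq f(x_n+y)>1+\Vert x_n\Vert-\varepsilon'(1+\Vert x_n\Vert)$, which falls inside $U_{n,m}$ once $\varepsilon'$ is chosen small in terms of $\Vert x_n\Vert$ and $m$. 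This is the sole step at which the Daugavet hypothesis is really used; everything else rests on Baire category together with the structure of the $w^*$-topology on $B_{X^{**}}$.
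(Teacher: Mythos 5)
Your argument is correct, but the main implication follows a genuinely different route from the paper's. For (\ref{teodaugabidusepa1})$\Rightarrow$(\ref{teodaugabidusepa2}) the paper works with the ball topology $b_X$ of Godefroy--Kalton: separability gives a countable basis $\{O_n\}$ for $b_X$, octahedrality (forced by the Daugavet property) makes the finite intersections $\bigcap_{k=1}^n O_k$ non-empty, Lemma \ref{lemthi}(\ref{lemthi3}) lets one pick $x_n$ in $\bigcap_{k=1}^n O_k$ inside a prescribed weakly open set, and then the cluster-point construction of \cite[Lemma 9.1]{gk} produces the desired $u$. You instead run a Baire category argument directly in the compact Hausdorff space $(B_{X^{**}},w^*)$: the sets $U_{n,m}$ are $w^*$-open by $w^*$-lower semicontinuity of the bidual norm, and their $w^*$-density is exactly where Theorem \ref{caradaugaclasi}(3) enters (your computation with the norming functional $f$ and the choice $\varepsilon'\leq 1/(m(1+\Vert x_n\Vert))$ is sound, as is the passage from the dense sequence $\{x_n\}$ to all of $X$ and the verification that $u\in S_{X^{**}}$). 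Separability is used in both proofs, but for different purposes: the paper needs it for the countable basis of $b_X$, you need it for the countable dense set indexing the $U_{n,m}$. Your approach is more self-contained --- it bypasses Lemma \ref{lemthi} and the external machinery of \cite{gk} entirely --- and it yields slightly more, namely that the set of $u\in S_{X^{**}}$ with $\Vert x+u\Vert=1+\Vert x\Vert$ for all $x\in X$ is a dense $G_\delta$ in $(B_{X^{**}},w^*)$ rather than merely $w^*$-dense, sharpening Remark \ref{compateogodnue}(1). What it does not buy is the thematic link to the ball topology and to Lemma \ref{lemthi}, which the paper exploits later when introducing the thickness indices. Your proof of (\ref{teodaugabidusepa2})$\Rightarrow$(\ref{teodaugabidusepa1}) via Goldstine and a norming functional is essentially the paper's argument.
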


\begin{proof}
(\ref{teodaugabidusepa2})$\Rightarrow$
(\ref{teodaugabidusepa1}). Pick $x\in S_X$, $\varepsilon>0$ and consider a non-empty relatively weakly open subset $W$ of $B_X$. Define $W^*$ to be the weakly-star open subset of $B_{X^{**}}$ defined by $W$ (that is, satisfying that $W^*\cap B_X=W$) and consider $u\in W^*\cap S_{X^{**}}$ as in (\ref{teodaugabidusepa2}). Pick a net $\{x_s\}$ in $B_X$ which is weakly-star convergent to $u$ in $B_{X^{**}}$. On the one hand, because of the weakly-star convergence condition, we can find $s_0$ such that $s\geq s_0$ implies $x_s\in W^*$, and hence $x_s\in W^*\cap B_X=W$. On the other hand, by the weak-star lower semicontinuity of the norm of $X^{**}$, we get
$$2=\Vert x+u\Vert\leq \liminf_s \Vert x_s+x\Vert,$$
so we can find $s\geq s_0$ such that $\Vert x_s+x\Vert>2-\varepsilon$, and (\ref{teodaugabidusepa1}) follows.

 (\ref{teodaugabidusepa1})$\Rightarrow$
 (\ref{teodaugabidusepa2}). Since $X$ is separable then the $b_X$ topology has a countable basis (see e.g. \cite[Introduction]{gk}). Consequently, consider $\{O_n:n\in\mathbb N\}$ to be a basis for the topology $b_X$ of $B_X$. Since $X$ has the Daugavet property then $X$ has an octahedral norm and, consequently, the $b_X$ topology of $B_X$ is not Hausdorff \cite[Theorem 9.1]{gk}. Thus, $\bigcap\limits_{k=1}^n O_k$ is a non-empty $b_X$ open subset of $B_X$ for every $n\in\mathbb N$. Pick $W$ to be a non-empty relatively weakly-star open subset of $B_{X^{**}}$ and $U$ to be another non-empty relatively weakly-star open subset of $B_{X^{**}}$ such that $\overline{U}^{w^*}\subseteq W$. By (\ref{lemthi3}) of Lemma \ref{lemthi} we conclude the existence of $x_n\in (U\cap B_X)\cap \bigcap\limits_{k=1}^n O_k$ for every $n\in\mathbb N$. Since $x_n\in\bigcap\limits_{k=1}^n O_k$ for every $n\in\mathbb N$ we deduce, following word by word the proof of \cite[Lemma 9.l]{gk}, the existence of a $w^*$-cluster point $u$ of the sequence $\{x_n\}$ in $B_{X^{**}}$ such that  
 $$\Vert x-x^{**}\Vert=1+\Vert x\Vert$$
 holds for every $x\in X$. Moreover, since $\{x_n\}$ is contained in $U$ and $u$ is a weak-star cluster point of $\{x_n\}$ we deduce that $u\in \overline{U}^{w^*}\subseteq W$. Consequently, (\ref{teodaugabidusepa2}) follows and the theorem is proved.
\end{proof}

\begin{remark}\label{compateogodnue}
\begin{enumerate}
\item Let $X$ be a separable Banach space. By \cite[Lemma 9.1]{gk} it follows that $X$ has an octahedral norm if, and only if, there exists $u\in S_{X^{**}}$ such that $\Vert x+u\Vert=1+\Vert x\Vert$ holds for every $x\in X$. Theorem \ref{teodaugabidusepa} can be read as follows: $X$ has the Daugavet property if, and only if, the set of such $u\in S_{X^{**}}$ is weak-star dense in $S_{X^{**}}$.
\item Given a separable Banach space $X$ such that $X^*$ additionally has the Daugavet property, Theorem \ref{teodaugabidusepa} can be proved following the argument of \cite[Lemma 2.12]{kssw} for weak-star open subsets instead of $w^*$-slices. 
\end{enumerate}
\end{remark}

As an application we will give some sufficient conditions for a projective tensor product space to enjoy the Daugavet property. For this, we will begin with a characterisation of the Daugavet property in separable $L$-embedded Banach spaces.

\begin{theorem}\label{equiLembe}
Let $X$ be a separable $L$-embedded Banach space. Assume that $X^{**}=X\oplus_1 Z$. Then, the following are equivalent:
\begin{enumerate}
\item \label{equiLembe1} $X^*$ has the Daugavet property.
\item \label{equiLembe2} $X$ has the Daugavet property.
\item \label{equiLembe3} $B_Z$ is weak-star dense in $B_{X^{**}}$.
\end{enumerate}
\end{theorem}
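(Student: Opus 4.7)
The plan is to establish the cycle $(\ref{equiLembe1}) \Rightarrow (\ref{equiLembe2}) \Rightarrow (\ref{equiLembe3}) \Rightarrow (\ref{equiLembe1})$. The implication $(\ref{equiLembe1}) \Rightarrow (\ref{equiLembe2})$ is the usual adjoint argument: every rank-one operator $T\colon X\to X$ has a rank-one adjoint $T^*\colon X^*\to X^*$ with $\Vert T^*\Vert=\Vert T\Vert$ and $\Vert I_X+T\Vert=\Vert (I_X+T)^*\Vert=\Vert I_{X^*}+T^*\Vert$, so the Daugavet equation for $T^*$ transfers directly to $T$.

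For $(\ref{equiLembe2}) \Rightarrow (\ref{equiLembe3})$ I would invoke Theorem \ref{teodaugabidusepa}, which is precisely where the separability hypothesis is used. Given a non-empty relatively weak-star open subset $W$ of $B_{X^{**}}$, Theorem \ref{teodaugabidusepa} produces $u\in S_{X^{**}}\cap W$ with $\Vert x+u\Vert=1+\Vert x\Vert$ for every $x\in X$. Writing $u=a+z$ with $a\in X$ and $z\in Z$ and using the $\ell_1$-decomposition gives $\Vert x+u\Vert=\Vert x+a\Vert+\Vert z\Vert$ for each $x\in X$; specialising to $x=-a$ forces $\Vert z\Vert=1+\Vert a\Vert$, and combined with $\Vert a\Vert+\Vert z\Vert=\Vert u\Vert=1$ this yields $a=0$. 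Hence $u\in S_Z\cap W\subseteq B_Z\cap W$, so $B_Z$ is weak-star dense in $B_{X^{**}}$.

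For the main implication $(\ref{equiLembe3}) \Rightarrow (\ref{equiLembe1})$ I would verify condition $(4)$ of Theorem \ref{caradaugaclasi} for $X^*$: fix $x^*\in S_{X^*}$, $\varepsilon>0$ and a $w^*$-slice $S(B_{X^*},x_0,\alpha)$ with $x_0\in S_X$, and produce $y^*\in B_{X^*}$ with $y^*(x_0)>1-\alpha$ and $\Vert x^*+y^*\Vert>2-\varepsilon$. First, Hahn--Banach supplies $v\in B_{X^{**}}$ with $v(x^*)=1$, so that the weak-star neighbourhood $\{w\in B_{X^{**}}:w(x^*)>1-\varepsilon/4\}$ is non-empty; by the hypothesis it meets $B_Z$, and normalising produces $u\in S_Z$ with $u(x^*)>1-\varepsilon/4$. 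Second, since $X^{**}=X\oplus_1 Z$ we have $\Vert x_0+u\Vert=2$, so I can pick $y^*\in B_{X^*}$ with $y^*(x_0)+u(y^*)=(x_0+u)(y^*)>2-\delta$ for any prescribed $\delta<\min\{\alpha,\varepsilon/4\}$. Since each summand is bounded by $1$, this forces $y^*(x_0)>1-\delta>1-\alpha$ (so $y^*\in S$) and $u(y^*)>1-\delta$, giving
$$\Vert x^*+y^*\Vert\geq u(x^*+y^*)=u(x^*)+u(y^*)>2-\varepsilon/2.$$

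The main obstacle is the implication $(\ref{equiLembe3})\Rightarrow(\ref{equiLembe1})$, where one has to convert a weak-star density statement about $X^{**}$ into a sharp separation property for $X^*$. The trick I plan to exploit is that an element $u$ of $B_Z$ which almost norms $x^*$ plays a \emph{double} role once the $L$-embedded structure enters: it lifts $x_0$ to the full-norm vector $x_0+u$, whose near-normers are forced into the prescribed $w^*$-slice, and \emph{simultaneously} serves as the functional witnessing the Daugavet inequality through $u(x^*+y^*)$. The other two implications are essentially direct consequences of Theorem \ref{teodaugabidusepa} and the adjoint identification of rank-one operators.
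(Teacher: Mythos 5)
Your implications $(\ref{equiLembe1})\Rightarrow(\ref{equiLembe2})$ and $(\ref{equiLembe2})\Rightarrow(\ref{equiLembe3})$ are correct and essentially coincide with the paper's proof (the latter is the same computation with the $\ell_1$-decomposition). The problem lies in the step you yourself single out as the main one, $(\ref{equiLembe3})\Rightarrow(\ref{equiLembe1})$: what you actually verify there is condition (4) of Theorem \ref{caradaugaclasi} \emph{for the space $X$}, not for $X^*$. In Theorem \ref{caradaugaclasi} the statements about $w^*$-slices of $B_{X^*}$ --- slices $S(B_{X^*},x_0,\alpha)$ with $x_0$ in the predual $X$ --- characterise the Daugavet property of $X$; the Daugavet property of $X^*$ requires the same conclusion for \emph{all} slices $S(B_{X^*},F,\alpha)$ with $F\in S_{X^{**}}$ (equivalently, condition (2) of Theorem \ref{caradaugaclasi} applied to $X^*$). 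The distinction is not cosmetic: $\mathcal C([0,1])$ has the Daugavet property while $\mathcal C([0,1])^*$ does not (cf. Example \ref{ejeextre}(\ref{ejec(K)})), so the Daugavet condition tested only on $w^*$-slices of the dual ball is strictly weaker than the Daugavet property of the dual.

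Moreover, your argument genuinely uses that $x_0\in X$: the identity $\Vert x_0+u\Vert=\Vert x_0\Vert+\Vert u\Vert=2$ for $u\in S_Z$ comes from the decomposition $X^{**}=X\oplus_1 Z$ and breaks down for a general $F=a+z\in S_{X^{**}}$, where $\Vert F+u\Vert=\Vert a\Vert+\Vert z+u\Vert$ need not be close to $2$. So your cycle in fact closes as $(\ref{equiLembe1})\Rightarrow(\ref{equiLembe2})\Leftrightarrow(\ref{equiLembe3})$ and never returns to $(\ref{equiLembe1})$; the equivalence with $(\ref{equiLembe1})$, which is the genuinely non-trivial content of the theorem, is left unproved. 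The paper does not prove it from scratch either: it disposes of the remaining implication by citing \cite[Theorem 2.2]{bm} (the label ``$(\ref{equiLembe2})\Rightarrow(\ref{equiLembe3})$'' on the last line of the paper's proof is an evident typo for that implication). You should either cite that result or supply its substantially harder argument. On the positive side, your slice construction is a clean, correct, self-contained proof of $(\ref{equiLembe3})\Rightarrow(\ref{equiLembe2})$, which the paper obtains only by passing through Theorem \ref{teodaugabidusepa}.
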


\begin{proof}
(\ref{equiLembe1})$\Rightarrow$
(\ref{equiLembe2}) is obvious.

(\ref{equiLembe2})$\Rightarrow$
(\ref{equiLembe3}). Let $W$ be a non-empty weakly-star open subset of $B_{X^{**}}$ and let us prove that $B_Z\cap W\neq \emptyset$. By Theorem \ref{teodaugabidusepa} we can find $u\in W\cap S_{X^{**}}$ such that
$$\Vert x+u\Vert=1+\Vert x\Vert$$
holds for every $x\in X$. Since $u\in X^{**}$ we can find $x\in X$ and $z\in Z$ such that $u=x+z$. Now
$$1\geq \Vert z\Vert=\Vert -x+(x+z)\Vert=1+\Vert x\Vert.$$
This implies that $x=0$ and, consequently, $u\in B_Z$. So $W\cap B_Z\neq \emptyset$, as desired.

(\ref{equiLembe2})$\Rightarrow$
(\ref{equiLembe3}) follows from \cite[Theorem 2.2]{bm}.
\end{proof}

This result generalises \cite[Theorem 3.2]{bm} under separability assumptions, where the authors proved that a real or complex $JBW^*$-triple $X$ has the Daugavet property if, and only if, its predual $X_*$ (which is an $L$-embedded Banach space) has the Daugavet property.

Now we will apply Theorem \ref{teodaugabidusepa} to study when the projective tensor product of an $L$-embedded Banach space with the Daugavet property enjoys the Daugavet property. For this we shall begin with the following abstract lemma.

\begin{lemma}\label{estadauten1fac}
Let $X$ be a separable Banach space with the Daugavet property and let $Y$ be a non-zero Banach space. Then, for every slice of $B_{X\pten Y}$ $S:=S(B_{X\pten Y},G,\alpha)$ there exists $u\in S_{X^{**}}$ and $y\in S_Y$ such that $(y\circ G)(u)>1-\alpha$ and 
$$\Vert z+u\otimes y\Vert_{(X\oplus \mathbb R u)\pten Y}=1+\Vert z\Vert$$
holds for every $z\in X\pten Y$. Moreover, if $X\oplus \mathbb R u$ is an ideal in $X^{**}$ and either $X^{**}$ or $Y$ has the metric approximation property, then $X\pten Y$ has the Daugavet property.
\end{lemma}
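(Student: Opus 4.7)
The plan is to first extract, from the non-emptiness of the slice $S$ together with $\Vert G\Vert=1$ (which may be assumed without loss of generality), a pair $x_0\in B_X$ and $y\in S_Y$ for which $(y\circ G)(x_0)>1-\alpha$; this is immediate from the duality $(X\pten Y)^*=L(X,Y^*)$ and the fact that $B_{X\pten Y}$ is the closed absolutely convex hull of elementary tensors. Since $y\circ G\in X^*$, the set
$$W:=\{v\in B_{X^{**}}:(y\circ G)(v)>1-\alpha\}$$
is a non-empty relatively weakly-star open subset of $B_{X^{**}}$. Applying Theorem \ref{teodaugabidusepa} to the separable Banach space $X$ (which has the Daugavet property) produces $u\in W\cap S_{X^{**}}$ satisfying $\Vert x+u\Vert=1+\Vert x\Vert$ for every $x\in X$, and in particular $(y\circ G)(u)>1-\alpha$.

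The second step exploits that the identity $\Vert x+u\Vert=1+\Vert x\Vert$ for every $x\in X$ is equivalent to $X\oplus\mathbb R u$ being isometrically an $\ell_1$-direct sum inside $X^{**}$. Since projective tensor products distribute over $\ell_1$-sums, one obtains the isometric identification
$$(X\oplus\mathbb R u)\pten Y \;\cong\; (X\pten Y)\oplus_1(\mathbb R u\pten Y) \;\cong\; (X\pten Y)\oplus_1 Y.$$
Reading off the norm of $z+u\otimes y$ in this decomposition for any $z\in X\pten Y$ yields the required equality $\Vert z+u\otimes y\Vert_{(X\oplus\mathbb R u)\pten Y}=\Vert z\Vert+1$, which finishes the first part.

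For the ``moreover'' part I would check the slice formulation of the Daugavet property from Theorem \ref{caradaugaclasi}: given a slice $S=S(B_{X\pten Y},G,\alpha)$, a vector $w\in S_{X\pten Y}$ and $\varepsilon>0$, I need some $v\in S$ with $\Vert v+w\Vert>2-\varepsilon$. Pick $u$ and $y$ from the first part. The ideal hypothesis on $X\oplus\mathbb R u$ in $X^{**}$ gives an isometric embedding $(X\oplus\mathbb R u)\pten Y\hookrightarrow X^{**}\pten Y$, while the metric approximation property hypothesis provides a second isometric embedding $X^{**}\pten Y\hookrightarrow (X\pten Y)^{**}$. Composing these and invoking the first part,
$$\Vert w+u\otimes y\Vert_{(X\pten Y)^{**}}=\Vert w+u\otimes y\Vert_{(X\oplus\mathbb R u)\pten Y}=\Vert w\Vert+1=2.$$
Now Goldstine's theorem produces a net $\{w_\beta\}\subseteq B_{X\pten Y}$ weak-star convergent to $u\otimes y$ in $(X\pten Y)^{**}$. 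Since $G\in (X\pten Y)^*$, we have $G(w_\beta)\to(y\circ G)(u)>1-\alpha$, so $w_\beta\in S$ eventually; by weak-star lower semicontinuity of the bidual norm, $\liminf_\beta \Vert w+w_\beta\Vert\geq 2$, hence $\Vert w+w_\beta\Vert>2-\varepsilon$ for large $\beta$, and such a $w_\beta$ serves as the required $v$.

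The main obstacle is not any single computation but the careful stacking of isometries $(X\oplus\mathbb R u)\pten Y\hookrightarrow X^{**}\pten Y\hookrightarrow (X\pten Y)^{**}$: the Daugavet-type identity is produced inside the auxiliary tensor product $(X\oplus\mathbb R u)\pten Y$, and it must be transported to the bidual $(X\pten Y)^{**}$ in order to approximate $u\otimes y$ by elements of $B_{X\pten Y}$ via Goldstine. Both embeddings being isometric is exactly what the ideal hypothesis on $X\oplus\mathbb R u$ and the metric approximation property assumption deliver.
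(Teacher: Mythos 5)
Your proof is correct and follows essentially the same route as the paper: reduce the slice to a slice of $B_X$ via an elementary tensor, invoke Theorem \ref{teodaugabidusepa} to produce $u$, establish the norm identity in $(X\oplus \mathbb{R}u)\pten Y$, and transport it to $(X\pten Y)^{**}$ through the two isometric embeddings before concluding via Goldstine (which is exactly the content of the implication (2)$\Rightarrow$(1) of Theorem \ref{teodaugabidusepa} that the paper cites at this point). The only cosmetic difference is that you derive the norm identity from the isometric distributivity of $\pten$ over $\ell_1$-sums, whereas the paper exhibits an explicit norming operator $T\oplus y^*\in L(X\oplus_1\mathbb{R},Y^*)$; the two arguments are interchangeable.
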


\begin{proof}
Pick $z\in X\pten Y$ and a slice $S:=S(B_{X\pten Y},G,\alpha)$. Consider $x\otimes y\in S\cap S_{X\pten Y}$. Notice that
$$x\otimes y\in S\Leftrightarrow G(x)(y)>1-\alpha\Leftrightarrow x\in S(B_X,y\circ G,\alpha).$$
By Theorem \ref{teodaugabidusepa} there exists $u\in S_{X^{**}}$ such that $u(y\circ G)>1-\alpha$ and that
$$\Vert x+\lambda u\Vert=\Vert x\Vert+\vert\lambda\vert$$
holds for every $x\in X$ and every $\lambda\in\mathbb R$. Denote by $X_u:=X\oplus\mathbb R u$. Now consider $T\in S_{L(X,Y^*)}$ such that $T(z)=\Vert z\Vert$, $y^*\in S_{Y^*}$ such that $y^*(y)=1$ and define $S:X_u\longrightarrow Y^*$ by the equation
$$S(x+\lambda u):=T(x)+\lambda y^*$$
for all $x\in X$ and every $\lambda\in\mathbb R$. Since $X_u$ is isometrically isomorphic to $X\oplus_1\mathbb R$, it is obvious that $\Vert S\Vert\leq 1$. Consequently
$$\Vert z+u\otimes y\Vert_{X_u\pten Y}\geq S(z+u\otimes y)=\Vert z\Vert+y^*(y)=\Vert z\Vert+1.$$
If $X_u$ is an ideal in $X^{**}$ then $X_u\pten Y$ is an isometric subspace of $X^{**}\pten Y$. Moreover, if either $X^{**}$ or $Y$ has the metric approximation property then $X^{**}\pten Y$ is an isometric subspace of $(X\pten Y)^{**}$. Consequently
$$\Vert z+u\otimes y\Vert_{(X\pten Y)^{**}}=1+\Vert z\Vert_{X\pten Y}.$$
Since $u(y\circ G)=(u\otimes y)(G)>1-\alpha$ and $z\in X\pten Y$ was arbitrary we conclude that $X\pten Y$ satisfies (\ref{teodaugabidusepa2}) in Theorem \ref{teodaugabidusepa}. Thus $X\pten Y$ enjoys the Daugavet property, which finishes the proof.
\end{proof}

\begin{remark}\label{remateogentenso} The assumption of Theorem \ref{estadauten1fac} of being $X_u$ an ideal in $X^{**}$ does not hold in general. Indeed, consider a projective tensor product $L_\infty\pten Y$ failing the Daugavet property and that $Y$ has the metric approximation property (see e.g. \cite{llr2} for $Y=\ell_3^3$). Then there exist $z:=\sum_{i=1}^n f_i\otimes y_i\in L_\infty\pten Y$, $\varepsilon_0>0$ and a slice $S:=S(B_{L_\infty\pten Y}, T,\alpha)$ such that, for every $v\in S$, it follows
$$\Vert z+v\Vert\leq \Vert z\Vert+\Vert v\Vert-\varepsilon_0.$$
Now consider $f\otimes y\in S$ and define $E:=\linspan\{f_1,\ldots, f_n,f\}$, which is a finite-dimensional subspace of $L_\infty$. By \cite[Theorem 1.5]{abrahamsen} there exists a separable ai-ideal $W$ in $X$ containing $E$. Since $W$ is an ai-ideal in $X$ then $W$ inherit the Daugavet property \cite[Proposition 3.8]{aln2}. Moreover notice that $T:=S(B_{W\pten Y}, T_{|W},\alpha)$ contains $f\otimes y$. Furthermore, since $W$ is an ai-ideal in $X$ then $\Vert z\Vert_{X\pten Y}=\Vert z\Vert_{W\pten Y}$. Consequently, by the conditions on $z$ and $S$ we deduce that
$$\Vert z+v\Vert_{W\pten Y}\leq \Vert v\Vert+\Vert z\Vert_{W\pten Y}-\varepsilon_0$$
holds for every $v\in T$. This implies that $W$ is a separable Banach space with the Daugavet property and $Y$ is a Banach space with the metric approximation property such that $W\pten Y$ fails the Daugavet property. Then the conclusion follows.\end{remark}

In spite of the previous remark we will exhibit a class of Banach spaces for which Theorem \ref{estadauten1fac} applies.

\begin{theorem}\label{tensoLembesepa}
Let $X$ be a separable $L$-embedded Banach space with the Daugavet property and let $Y$ be a non-zero Banach space. If either $X^{**}$ or $Y$ has the metric approximation property then $X\pten Y$ has the Daugavet property.
\end{theorem}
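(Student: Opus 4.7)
The strategy is to reduce to Lemma~\ref{estadauten1fac}: given an arbitrary slice $S:=S(B_{X\pten Y},G,\alpha)$, the lemma supplies $u\in S_{X^{**}}$ and $y\in S_Y$ with the right octahedral-type property, but the Daugavet property of $X\pten Y$ only follows if $X\oplus \mathbb{R}u$ is an ideal in $X^{**}$ and either $X^{**}$ or $Y$ has the metric approximation property. The second condition is in the hypotheses, so the whole task is to ensure the first.

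Write $X^{**}=X\oplus_1 Z$. The pivotal observation is that the vector $u$ produced by Theorem~\ref{teodaugabidusepa} inside the proof of Lemma~\ref{estadauten1fac} is automatically forced to lie in $Z$. Indeed, decomposing $u=x_0+w$ with $x_0\in X$ and $w\in Z$ and applying the identity $\Vert x+u\Vert=\Vert x\Vert+1$ at $x=-x_0$ yields
\[
1\geq \Vert w\Vert=\Vert -x_0+(x_0+w)\Vert=\Vert x_0\Vert+1,
\]
so $x_0=0$ and $u\in S_Z$. This is the same observation used in step (\ref{equiLembe2})$\Rightarrow$(\ref{equiLembe3}) of Theorem~\ref{equiLembe}.

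The main technical step — the place where I expect to need the most care — is then to verify that $X\oplus \mathbb{R}u$ is an ideal in $X^{**}$ for any such $u\in S_Z$. Because $u\in Z$, the norm inherited from $X\oplus_1 Z$ on $X\oplus \mathbb{R}u$ is $\Vert x+\lambda u\Vert=\Vert x\Vert+\vert\lambda\vert$, so $(X\oplus \mathbb{R}u)^*\cong X^*\oplus_\infty \mathbb{R}$; dualizing the $L$-decomposition likewise gives $X^{***}\cong X^*\oplus_\infty Z^*$. Pick by Hahn-Banach a norm-one $\psi_0\in Z^*$ with $\psi_0(u)=1$ and define
\[
\varphi:(X\oplus \mathbb{R}u)^*\longrightarrow X^{***},\qquad \varphi(\phi,c):=(\phi,c\psi_0).
\]
This $\varphi$ is linear, restricts correctly to $X\oplus \mathbb{R}u$, and preserves the norm $\max(\Vert \phi\Vert,\vert c\vert)$; by Theorem~\ref{caraidehbope}, $X\oplus \mathbb{R}u$ is an ideal in $X^{**}$. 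The ``moreover'' clause of Lemma~\ref{estadauten1fac} now delivers the Daugavet property of $X\pten Y$.
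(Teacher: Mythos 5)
Your proposal is correct and follows essentially the same route as the paper: the paper likewise invokes Theorem \ref{equiLembe} (whose proof contains exactly your computation forcing $u\in S_Z$) to place $u$ in $S_Z$, and then builds the same Hahn--Banach extension operator $X_u^*=X^*\oplus_\infty\mathbb{R}u^*\longrightarrow X^{***}=X^*\oplus_\infty Z^*$ to conclude via Theorem \ref{caraidehbope} that $X_u$ is an ideal in $X^{**}$, so that Lemma \ref{estadauten1fac} applies.
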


\begin{proof}
In this case $X^{**}=X\oplus_1 Z$ for some subspace $Z\subseteq X^{**}$ for which $B_Z$ is $w^*$-dense in $B_{X^{**}}$ because of Theorem \ref{equiLembe}. This implies that the element $u$ of the proof of Lemma \ref{estadauten1fac} can be taken in $S_Z$. Pick $u^*\in S_{Z^*}$ such that $u^*(u)=1$. Notice that, if we define $X_u:=X\oplus \mathbb R u$, then $X_u^*=X^*\oplus_\infty \mathbb R u^*$, so the natural inclusion map $\varphi:X_u^*\longrightarrow X^{***}=X^*\oplus_\infty Z^*$ satisfies that
$$\varphi(x^*+\lambda u^*)(x+\lambda u)=(x^*+\lambda u^*)(x+\lambda u)$$
for every $x+\lambda u\in X_u$, which proves that $\varphi$ is a Hahn-Banach extension operator. This implies that $X_u$ is an ideal in $X$ by Theorem \ref{caraidehbope}, so Theorem \ref{estadauten1fac} applies.
\end{proof}

Note that the key to prove Theorem \ref{tensoLembesepa} is that $B_Z$ is $w^*$-dense in $B_{X^{**}}$. Let us exhibit a class of $L$-embedded Banach spaces for which the previous assumption holds, for which we will have to introduce a bit of notation. We recall that a complex
$JB^*$-triple is a complex Banach space $X$ with  a continuous
triple product $\{...\}:X\times X\times X\rightarrow X$ which is
linear and symmetric in  the outer variables, and conjugate-linear
in the middle variable, and satisfies:
\begin{enumerate}
\item For all $x$ in $X$, the mapping $y\rightarrow \{${\it
xxy}$\}$ from $X$ to $X$  is  a  hermitian operator on $X$ and has
nonnegative spectrum. \item The main identity $$
\{ab\{xyz\}\}=\{\{abx\}yz\}-\{x\{bay\}z\}+\{xy\{abz\}\} $$ holds
for all $a,b,x,y,z$  in $X$. \item $\Vert \{xxx\}\Vert =\Vert
x\Vert ^{3}$ for every $x$ in $X$.
\end{enumerate}
Concerning to the condition (1) above, we also recall  that  a  bounded
linear operator $T$ on a complex Banach space $X$ is said to be
hermitian if \linebreak $\Vert \exp (irT)\Vert =1$ for every $r$
in ${\Bbb R}$. Examples of complex $JB^*$-triples are all
$C^*$-algebras under the triple product
$$\{xyz\}:=\frac{1}{2}(xy^*z+zy^*x).$$

Following \cite{ikr}, we define real $JB^*$-triples as norm-closed
real subtriples of complex $JB^*$-triples. Here, by a subtriple we
mean a subspace which is closed under triple products of its
elements. Real $JBW^*$-triples where first introduced as those
real $JB^*$-triples which are dual Banach spaces in such a way
that the triple product becomes separately $w^*$-continuous (see
\cite[Definition 4.1 and Theorem 4.4]{ikr}). Later, it has been
shown in \cite{mp} that the requirement of separate
$w^*$-continuity of the triple product is superabundant. The
bidual of every real (respectively, complex) $JB^*$-triple $X$ is
a $JBW^*$-triple under a suitable triple product which extends the
one of $X$ \cite[Lemma 4.2]{ikr} (respectively,~\cite{din}).

Now we can establish the announced result.

\begin{proposition}\label{coropredJBW*triple}
Let $X$ be a real or complex $JBW^*$-triple, let $X_*$ be its predual and consider a non-zero Banach space $Y$. If $X_*$ has the Daugavet property and either $Y$ or $X^*$ has the metric approximation property, then $X_*\pten Y$ has the Daugavet property.
\end{proposition}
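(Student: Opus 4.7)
The plan is to follow the proof of Theorem \ref{tensoLembesepa} while replacing its one use of separability---the application of Theorem \ref{teodaugabidusepa}---by a direct appeal to the geometry of $JBW^*$-triples. Since $X$ is a $JBW^*$-triple, $X_*$ is $L$-embedded; write $(X_*)^{**} = X_* \oplus_1 Z$. The first and crucial step is to show that the Daugavet property of $X_*$ forces $B_Z$ to be $w^*$-dense in $B_{(X_*)^{**}}$ without any separability hypothesis. This will follow by combining \cite[Theorem 3.2]{bm}, which transfers the Daugavet property between $X_*$ and $X = (X_*)^*$, with \cite[Theorem 2.2]{bm}, which for an $L$-embedded space characterises the Daugavet property of the dual precisely in terms of this density of $B_Z$; both results are insensitive to separability.

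Once the density is available, the remainder of the argument mirrors that of Lemma \ref{estadauten1fac} and Theorem \ref{tensoLembesepa}. Given a slice $S := S(B_{X_* \pten Y}, G, \alpha)$, pick $x \otimes y \in S$ with $x \in S_{X_*}$, $y \in S_Y$, so that $(y \circ G)(x) > 1 - \alpha$. The $w^*$-open set $\{\phi \in B_{(X_*)^{**}} : \phi(y \circ G) > 1 - \alpha\}$ is non-empty and, by the first step, meets $B_Z$; normalising yields $u \in S_Z$ with $u(y \circ G) > 1 - \alpha$. Because $u \in Z$, the subspace $X_u := X_* + \mathbb{R}u$ is isometric to $X_* \oplus_1 \mathbb{R}$, and the Hahn--Banach extension argument of Theorem \ref{tensoLembesepa} (using Theorem \ref{caraidehbope} and the decomposition $(X_*)^{***} = X \oplus_\infty Z^*$) shows that $X_u$ is an ideal in $(X_*)^{**}$. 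The metric approximation property hypothesis on $Y$ or on $X^* = (X_*)^{**}$ then gives the chain of isometric inclusions $X_u \pten Y \hookrightarrow (X_*)^{**} \pten Y \hookrightarrow (X_* \pten Y)^{**}$.

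To verify the Daugavet property, fix $z \in S_{X_* \pten Y}$ and $\varepsilon > 0$. Taking $T \in S_{L(X_*, Y^*)}$ with $\langle T, z \rangle = 1$ and $y^* \in S_{Y^*}$ with $y^*(y) = 1$, the operator $R : X_u \to Y^*$ defined by $R(x' + \lambda u) := T(x') + \lambda y^*$ has norm at most $1$ because $X_u = X_* \oplus_1 \mathbb{R}u$, and evaluating it at $z + u \otimes y$ forces $\|z + u \otimes y\|_{X_u \pten Y} \geq 2$; by the chain of embeddings, $\|z + u \otimes y\|_{(X_* \pten Y)^{**}} = 2$. Goldstine's theorem produces a net $\{w_\beta\} \subseteq B_{X_* \pten Y}$ with $w_\beta \to u \otimes y$ in the $w^*$-topology of $(X_* \pten Y)^{**}$. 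The continuity of $G \in (X_* \pten Y)^*$ forces $w_\beta \in S$ eventually, and $w^*$-lower semicontinuity of the norm gives $\|z + w_\beta\| > 2 - \varepsilon$ eventually---this is exactly the slice characterisation in Theorem \ref{caradaugaclasi}.

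The main obstacle is the very first step: Theorem \ref{teodaugabidusepa} is unavailable without separability, so the $w^*$-density of $B_Z$ must be pulled from the specific $JBW^*$-triple-theoretic machinery of \cite{bm} rather than from the general techniques of Section \ref{sectifuerte}. Everything afterwards is a direct adaptation of arguments already in place.
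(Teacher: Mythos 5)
Your overall architecture coincides with the paper's: reduce the problem to the $w^*$-density of $B_Z$ in $B_{(X_*)^{**}}$ and then rerun the proofs of Lemma \ref{estadauten1fac} and Theorem \ref{tensoLembesepa}. Your second and third paragraphs are a correct adaptation of those arguments (taking $u\in S_Z$ so that $X_*\oplus\mathbb{R}u$ is automatically an $\ell_1$-sum, the Hahn--Banach extension operator, and the Goldstine/lower-semicontinuity argument to land back in the slice). The gap is in your first step. \cite[Theorem 2.2]{bm} is only the implication ``$B_Z$ $w^*$-dense $\Rightarrow$ $X_*$ and $(X_*)^*$ have the Daugavet property''; it is not a two-way characterisation. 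The converse implication for a general $L$-embedded space is exactly the content that Theorem \ref{equiLembe} of this paper establishes, and only under separability --- if that converse were already available in \cite{bm} without separability, Theorem \ref{equiLembe} would add nothing and the separability hypothesis in Theorem \ref{tensoLembesepa} would be superfluous. So the chain ``$X_*$ Daugavet $\Rightarrow$ $X$ Daugavet $\Rightarrow$ $B_Z$ $w^*$-dense'' leaves its second arrow unjustified, and that arrow is the whole point of the proposition.

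The correct non-separable route, which is the one the paper takes, uses a different part of \cite[Theorem 3.2]{bm}: for the predual of a $JBW^*$-triple, the Daugavet property of $X_*$ is equivalent to $B_{X_*}$ having no extreme points. Since the extreme points of the unit ball of an $\ell_1$-sum are the union of the extreme points of the unit balls of the summands, every extreme point of the $w^*$-compact convex set $B_{(X_*)^{**}}=B_{X_*\oplus_1 Z}$ must then lie in $B_Z$, and the Krein--Milman theorem yields
$$B_{(X_*)^{**}}=\overline{\mathrm{co}}^{\,w^*}\bigl(\mathrm{ext}(B_{(X_*)^{**}})\bigr)\subseteq \overline{B_Z}^{\,w^*}.$$
With this substitution for your first paragraph, the rest of your argument goes through as written.
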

\begin{proof}
In this case $X_*$ is an $L$-embedded Banach space with the Daugavet property. Moreover, it follows that $X^*=X_*\oplus_1 Z$ for some subspace $Z$ of $X^*$. Since $X_*$ has the Daugavet property then $X_*$ does not have any extreme point \cite[Theorem 3.2]{bm}. Consequently, $B_Z$ is $w^*$-dense in $B_{X^*}$, and the proof of Theorem \ref{tensoLembesepa} applies.
\end{proof}

\section{A Daugavet index of thickness}\label{secindithick}

\bigskip

Lemma \ref{lemthi} joint to the definition of the index $T(X)$ motivates the definition of the following index
\begin{equation}\label{indicedauga}
\mathcal T(X):=\inf\left\{r>0:\begin{array}{c}
\exists\ n\in\mathbb N, x_1,\ldots, x_n\in S_X\\
\exists\ \emptyset \neq W\subseteq B_X\mbox{ weakly open } W\subseteq\bigcup\limits_{i=1}^n B(x_i,r)\end{array} \right\}.
\end{equation}
Moreover, in dual Banach spaces, it makes sense considering the following index
\begin{equation}\label{indicedualdauga}
\mathcal T_{w^*}(X):=\inf\left\{r>0:\begin{array}{c}
\exists\ n\in\mathbb N, x_1,\ldots, x_n\in S_X\\
\exists\ \emptyset\neq W\subseteq B_X\mbox{ weak-star open } W\subseteq\bigcup\limits_{i=1}^n B(x_i,r)\end{array} \right\}.
\end{equation}
It is obvious from Lemma \ref{lemthi} that a Banach space $X$ has the Daugavet property if, and only if, $\mathcal T(X)=2$ which in turn is equivalent to the fact that $T_{w^*}(X^*)=2$. It is also clear, from the definition of $\mathcal T(X)$, that $\mathcal T(X)\leq T(X)$, but the inequality may be strict. Indeed, given a non-empty relatively weakly open subset $W$ of $B_X$ such that $W\cap S_X\neq \emptyset$ and $x\in W\cap S_X$, it is clear that $W\subseteq B(x,\diam(W))$. Consequently, the following proposition is clear.
\begin{proposition}\label{indidiamslices}
Let $X$ be a Banahch whose unit ball contains non-empty weakly open subsets of $B_X$ whose diameter is smaller than $\varepsilon$. Then $\mathcal T(X)\leq \varepsilon$. In particular, if $X$ has a dentable unit ball (i.e. the unit ball contains slices of arbitrarily small diameter) then $\mathcal T(X)=0$.
\end{proposition}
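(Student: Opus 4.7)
My approach is to unwind the definition of $\mathcal{T}(X)$ with $n=1$. To witness $\mathcal{T}(X)\leq\varepsilon$, it suffices to produce a single point $x\in S_X$ and a non-empty weakly open $W\subseteq B_X$ with $W\subseteq B(x,\varepsilon)$. The hypothesis supplies a non-empty weakly open $W\subseteq B_X$ with $\diam(W)<\varepsilon$, so all I need is to locate a point $x\in W\cap S_X$: then for every $z\in W$ one has $\Vert z-x\Vert\leq\diam(W)<\varepsilon$, so $W\subseteq B(x,\varepsilon)$ and therefore $\mathcal{T}(X)\leq\varepsilon$.

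The only real content is therefore verifying that $W\cap S_X\neq\emptyset$, which is the observation tacitly used in the paragraph preceding the proposition. I would argue it by the usual infinite-dimensional trick: pick $y\in W$ and a basic weak neighbourhood
$$V=\{z\in B_X : |x_i^*(z-y)|<\epsilon_i,\ i=1,\ldots,k\}\subseteq W.$$
A unit vector $v$ in the finite-codimensional subspace $\bigcap_{i=1}^k \ker x_i^*$ satisfies $y+tv\in V$ for every $t\in\real$ with $\Vert y+tv\Vert\leq 1$. Since $t\mapsto\Vert y+tv\Vert$ is continuous and tends to $+\infty$, the intermediate value theorem delivers $t_0>0$ with $\Vert y+t_0v\Vert=1$, whence $y+t_0v\in W\cap S_X$. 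The finite-dimensional case is trivial, since then $T(X)=0$ and thus $\mathcal{T}(X)\leq T(X)=0\leq\varepsilon$.

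For the ``in particular'' part, I would observe that slices are relatively weakly open in $B_X$ and always meet $S_X$: if $y$ lies in a slice $\{x\in B_X : x^*(x)>\Vert x^*\Vert-\alpha\}$, then $\Vert y\Vert>\Vert x^*\Vert-\alpha$ and the normalized point $y/\Vert y\Vert$ belongs to $S_X$ and still lies in the slice, because $x^*(y/\Vert y\Vert)=x^*(y)/\Vert y\Vert\geq x^*(y)>\Vert x^*\Vert-\alpha$. If $B_X$ contains slices of arbitrarily small diameter, applying the first part with each such slice playing the role of $W$ gives $\mathcal{T}(X)\leq\delta$ for every $\delta>0$, so $\mathcal{T}(X)=0$. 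I do not foresee any genuine obstacle here; the whole argument reduces to the definition of $\mathcal{T}(X)$ combined with the standard ``normalize to the sphere'' trick, which is the only step where infinite-dimensionality is secretly used.
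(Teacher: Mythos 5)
Your argument is correct and is essentially the paper's: the proposition is deduced from the observation, made in the sentence immediately preceding it, that $W\subseteq B(x,\diam(W))$ for any $x\in W\cap S_X$; you merely make explicit the tacitly assumed fact that $W\cap S_X\neq\emptyset$, via the standard normalization along a direction in $\bigcap_{i=1}^k\ker x_i^*$. One small correction to your finite-dimensional aside: $T(X)=0$ is false there --- since $0\in B_X$ lies at distance $1$ from every point of $S_X$, one always has $T(X)\geq 1$; it is the sphere-covering index $T_W(X)$ that vanishes in finite dimensions. The finite-dimensional case is nonetheless immediate, because the weak and norm topologies then coincide, so for every $r>0$ the set $\{z\in B_X:\Vert z-x_0\Vert<r\}$ with $x_0\in S_X$ is a non-empty weakly open subset of $B_X$ contained in $B(x_0,r)$, giving $\mathcal T(X)=0$ directly.
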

Let us now analyse the index $\mathcal T(X)$ for some classical Banach spaces.
\begin{example}\label{ejeextre}
\begin{enumerate}
\item \label{eje1} It is known that $T(\ell_1)=2$ but $\mathcal T(\ell_1)=0$. This show that the inequality $\mathcal T(X)\leq T(X)$ can be strict.
\item \label{eje2} $\mathcal T(c_0)=1$. Indeed, given $r<1$, $\varepsilon:=1-r$ and a non-empty relatively weakly open subset $W\subseteq B_{c_0}$, we will prove that there are no $x_1,\ldots, x_k\in W\cap S_X$ such that $W\subseteq \bigcup\limits_{i=1}^k B(x_i,r)$. For this pick $x_1,\ldots, x_k\in W\cap S_X$ and   $m\in\mathbb N$ such that $n\geq m$ implies $\vert x_i(n)\vert<\varepsilon$. Pick $n\geq m$ such that $x_1+r'e_n\in W$ for $r'>r$ close enough to $r$ (such $n$ exists because $\{e_n\}$ is weakly null). Obviously $x_1+r'e_n\notin \bigcup\limits_{i=1}^k B(x_i,r)$, so $\mathcal T(c_0)\geq 1$. On the other hand, $S(B_{c_0},e_1^*,\alpha)\subseteq B(e_1,1)$, so $\mathcal T(c_0)=1$ as desired. This proves that the converse of Proposition \ref{indidiamslices} does not hold, because it is obvious that every non-empty relatively weakly open subset of $B_{c_0}$ has diameter 2.
\item \label{eje3} $\mathcal T_{w^*}(\ell_\infty)=1$. Indeed, $\mathcal T_{w^*}(\ell_\infty)\leq 1$ as in the previous example. In order to prove the reverse inequality consider a basic non-emtpy weakly-star open subset of $B_{\ell_\infty}$ of the form  $W:=\bigcap\limits_{i=1}^k S(B_{\ell_\infty}, f_i,\alpha_i)$, $0<\varepsilon<\min\limits_{1\leq i\leq k}\alpha_i$ and $x_1,\ldots, x_p\in S_{\ell_\infty}\cap W$. Pick $m\in\mathbb N$ such that $\sum_{n=m+1}^\infty \vert f_i(n)\vert<\varepsilon$ holds for every $i\in\{1,\ldots, k\}$ and pick a finitely-supported sequence $x\in W\cap S_{\ell_\infty}$ such that $f_i(x)>1+\varepsilon-\alpha_i$ holds for every $i\in\{1,\ldots, k\}$. We can assume, with no loss of generality, that $supp(x)\subseteq \{1,\ldots, m\}$. Now define $y\in B_{\ell_\infty}$ by the equation
$$y(p):=\left\{\begin{array}{cc}
x(p) & p\in\{1,\ldots, m\},\\
x_i(q)-sign(x_i(q))e_{q}& q=m+i\in\{m+1,\ldots, m+p\},\\
0 & \mbox{otherwise}.
\end{array} \right.$$
It is clear that $y\in B_{\ell_\infty}$. Moreover, given $i\in\{1,\ldots, k\}$, it follows
$$f_i(y)=\sum_{n=1}^m f_i(n)y(n)+\sum_{n=1}^\infty f_i(n)y(n)\geq$$
$$ f_i(x)-\varepsilon>1+\varepsilon-\alpha_i-
\varepsilon=1-\alpha_i,$$
so $y\in W$. Moreover, given $i\in\{1,\ldots, p\}$, it follows that $\Vert x_i-y\Vert\geq \vert x_i(m+i)-x_i(m+i)+sign(x_i(m+i))\vert=1$. So $\mathcal T_{w^*}(\ell_\infty)\geq 1$, which finishes the proof.
\item\label{ejec(K)} $\mathcal T(\mathcal C([0,1]))=2$ since $\mathcal C([0,1])$ has the Daugavet property. However, the unit ball of $\mathcal C([0,1])^*$ has denting points and, consequently, $\mathcal T(\mathcal C([0,1])^*)=0$. Thus $\mathcal T_{w^*}(\mathcal C([0,1])^{**})<2$.\end{enumerate}
\end{example}
This index still has a relation with the Daugavet equation even when $\mathcal T(X)<2$. The proof of the following result follows the ideas of \cite[Theorem 2.3]{kssw}, but we include the proof for the sake of completeness.
\begin{proposition}\label{indipeqdau}
Let $X$ be a Banach space. Then, for every norm one and weakly compact operator $T:X\longrightarrow X$, it follows
$$\Vert T+I\Vert\geq \mathcal T(X).$$
Similarly, it follows
$$\Vert T+I\Vert\geq \mathcal T_{w^*}(X^*).$$
\end{proposition}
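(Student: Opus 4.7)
The plan is to produce a non-empty weakly open subset $W\subseteq B_X$ on which $T$ has small norm-oscillation and $\|Tx\|$ is close to $1$; the hypothesis $\|T+I\|=r$ will then confine $W$ to a single ball $B(z_0,r+\text{small})$ with $z_0\in S_X$, which by the very definition of $\mathcal{T}(X)$ forces $\mathcal{T}(X)\leq r$.

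Fix $\varepsilon>0$ and let $r=\|T+I\|$. The set $K:=\overline{T(B_X)}^w$ is weakly compact and convex, hence enjoys the Radon--Nikodym property, so the strongly exposing functionals of $K$ form a norm-dense subset of $X^*$. Combining this density with the Hahn--Banach theorem applied to a vector almost attaining $\|T\|=1$, I can select $y_1^*\in S_{X^*}$ with $\|T^*y_1^*\|>1-\varepsilon$ together with $\alpha_1>0$ so that $\diam(S(K,y_1^*,\alpha_1))<\varepsilon$. The set $W:=S(B_X,T^*y_1^*,\alpha_1)$ is then a non-empty slice of $B_X$ with $T(W)\subseteq S(K,y_1^*,\alpha_1)$, so $\diam(T(W))<\varepsilon$ and $\|Tx\|>1-\varepsilon-\alpha_1$ for every $x\in W$. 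Fix $x_0\in W$ and set $z_0:=-Tx_0/\|Tx_0\|\in S_X$. For each $x\in W$, the bound $\|x+Tx\|\leq r$ gives $x\in B(-Tx,r)$, which combined with $\|Tx-Tx_0\|<\varepsilon$ and $\|{-Tx_0}-z_0\|=1-\|Tx_0\|<\varepsilon+\alpha_1$ yields $W\subseteq B(z_0,r+2\varepsilon+\alpha_1)$. Consequently $\mathcal{T}(X)\leq r+2\varepsilon+\alpha_1$, and letting $\varepsilon,\alpha_1\to 0$ produces $\mathcal{T}(X)\leq \|T+I\|$.

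For the $w^*$-statement I would apply the same scheme to $T^*:X^*\to X^*$, which is weakly compact by Gantmacher's theorem, with $K^*:=T^*(B_{X^*})$ (weakly compact convex, hence RNP) replacing $K$. The point at which weak compactness of the original $T$ is genuinely needed is that $T^{**}y^{**}\in X$ for every $y^{**}\in X^{**}$, which forces the pullback under $T^*$ of any slice $S(K^*,y_1^{**},\alpha_1)$ of $K^*$ to coincide with the $w^*$-slice $S(B_{X^*},T^{**}y_1^{**},\alpha_1)$ of $B_{X^*}$. Repeating the same covering argument with $w^*$-slices in place of slices delivers $\mathcal{T}_{w^*}(X^*)\leq \|T+I\|$.

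The main technical obstacle is the joint choice of the functional $y_1^*$: it must simultaneously be nearly norming for $T^*$ and produce a small-diameter slice of $K$. This is precisely where one exploits the RNP of the weakly compact set $K$ through the density of strongly exposing functionals; everything else in the proof amounts to routine bookkeeping with norms, balls, and slices, plus the observation about $T^{**}y^{**}\in X$ needed for the $w^*$-variant.
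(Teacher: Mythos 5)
Your argument is correct and is essentially the paper's own proof read in the contrapositive direction: both exploit the Radon--Nikodym property of the weakly compact set $\overline{T(B_X)}$ to produce a small-diameter slice of it near a point of norm close to $1$, pull that back to a slice $S(B_X,T^*y_1^*,\alpha_1)$ of $B_X$ on which $T$ is nearly the constant $Tx_0$, and then invoke the definition of $\mathcal T(X)$ (the paper picks a point of the slice far from $-Tx_0/\Vert Tx_0\Vert$ to bound $\Vert T+I\Vert$ from below, while you cover the slice by the single ball $B(-Tx_0/\Vert Tx_0\Vert,\Vert T+I\Vert+2\varepsilon+\alpha_1)$ to bound $\mathcal T(X)$ from above --- the same estimate). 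The only cosmetic deviations are your use of the density of strongly exposing functionals where the paper uses denting points, and your explicit observation that $T^{**}(X^{**})\subseteq X$ to justify that the relevant slices of $B_{X^*}$ are weak-star slices, where the paper simply cites $w^*$-dentability of $T^*(B_{X^*})$.
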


\begin{proof}
Pick $T:X\longrightarrow X$ a weakly compact operator such that $\Vert T\Vert=1$ and $\varepsilon>0$. Then $K=\overline{T(B_X)}$ is weakly compact and, consequently, we can find a denting point $y_0$ of $K$ such that $\Vert y_0\Vert>1-\varepsilon$. For $0<\delta<\varepsilon$ we can find a slice $S:=\{y\in K: y^*(y)>1-\delta\}$ containing $y_0$ and having diameter smaller than $\varepsilon$ (see. e.g. \cite[Theorem 3.6.1]{bou}). For $x^*=T^*(y^*)$, we have $\Vert x^*\Vert=1$ and 
$$T(S(B_X,x^*,\delta))\subseteq S.$$
Now we can find $x\in S(B_X,x^*,\delta)$ such that $\left\Vert x+\frac{y_0}{\Vert y_0\Vert}\right\Vert>\mathcal T(X)-\varepsilon$, so $\Vert x+y_0\Vert>\mathcal T(X)-2\varepsilon$. Moreover $T(x)\in S$ and thus $\Vert T(x)-y_0\Vert<\varepsilon$. Consequently
$$\Vert T+I\Vert\geq \Vert T(x)+x\Vert\geq \Vert x+y_0\Vert-\Vert T(x)-y_0\Vert>\mathcal T(X)-3\varepsilon.$$
Since $\varepsilon>0$ was arbitrary we conclude the desired result.

The second part of the proof follows from the fact that $T^*$ is also weakly compact and then $T^*(B_{X^*})$ has the Radon-Nikodym property, so $T(B_{X^*})$ is $w^*$-dentable (see \cite[Theorem 4.2.13 (f)]{bou}).
\end{proof}

Now we turn to analyse the index $\mathcal T$ with respect to $\ell_p$-sums for $1\leq p\leq \infty$.

\begin{proposition}\label{esta1infi}
Let $X$ and $Y$ be Banach spaces. Then:

\begin{enumerate}
\item\label{thicinfisum} $\mathcal T(X\oplus_\infty Y)\geq \min\{\mathcal T(X),\mathcal T(Y)\}$. Moreover, if $\mathcal T(X\oplus_\infty Y)>1$ then the equality holds.
\item\label{thic1sum} $\mathcal T(X\oplus_1 Y)\leq\min\{\mathcal T(X),\mathcal T(Y)\}$.
\item \label{thicpsum} $\mathcal T(X\oplus_p Y)\leq \left(\frac{(2^\frac{1}{p}+1)^p
1}{2}\right)^\frac{1}{p}$ for every $1\leq p\leq \infty$.
\end{enumerate}
\end{proposition}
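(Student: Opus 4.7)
For part (\ref{thicinfisum}), the proof hinges on the fact that the weak topology on $X\oplus_\infty Y$ is the product of the factor weak topologies, so any non-empty weakly open subset of $B_{X\oplus_\infty Y}$ can be shrunk to a product $W=W_X\times W_Y$ of non-empty weakly open subsets of $B_X$ and $B_Y$. For $\mathcal{T}(X\oplus_\infty Y)\geq\min\{\mathcal{T}(X),\mathcal{T}(Y)\}$, suppose $r<\min\{\mathcal{T}(X),\mathcal{T}(Y)\}$ and $W\subseteq\bigcup_{i=1}^n B((x_i,y_i),r)$ with $(x_i,y_i)\in S_{X\oplus_\infty Y}$. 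Any $i$ with $\|x_i\|<1$ satisfies $\|y_i\|=1$; since $r<\mathcal{T}(Y)$ forbids $W_Y$ from being contained in a single ball $B(y_i,r)$, and closed balls are weakly closed, I can replace $W_Y$ by the non-empty weakly open set $W_Y\setminus B(y_i,r)$ and drop index $i$ from the covering. After finitely many such reductions, every remaining centre has $x_i\in S_X$; fixing any $y_0$ in the shrunken $W_Y$ produces a covering $W_X\subseteq\bigcup B(x_i,r)$ with $x_i\in S_X$, contradicting $r<\mathcal{T}(X)$. For the equality under $\mathcal{T}(X\oplus_\infty Y)>1$, the direct construction $W_X\times B_Y\subseteq\bigcup B((x_i,0),\max\{r,1\})$ (for any $W_X\subseteq\bigcup B(x_i,r)$ witnessing $r>\mathcal{T}(X)$) gives $\mathcal{T}(X\oplus_\infty Y)\leq\max\{\mathcal{T}(X),1\}$. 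This forces $\mathcal{T}(X)\geq 1$ under the hypothesis, and then taking $r\geq 1$ and letting $r\to\mathcal{T}(X)^+$ closes the equality (and symmetrically with $Y$).

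For part (\ref{thic1sum}), I lift a near-optimal cover of $X$ into $X\oplus_1 Y$ by imposing a slice that forces the $Y$-coordinate to be small. Given $r>\mathcal{T}(X)$ with $W_X\subseteq\bigcup_{i=1}^n B(x_i,r)$ and $x_i\in S_X$, pick $x_0\in W_X\cap S_X$ (non-empty weakly open subsets of $B_X$ meet $S_X$ in infinite dimension; the finite-dimensional case has $\mathcal{T}(X)=0$ and is handled separately by dentability of $B_{X\oplus_1 Y}$), choose $x^*\in S_{X^*}$ with $x^*(x_0)=1$, and for $\delta>0$ set
\[W:=\{(x,y)\in B_{X\oplus_1 Y}:x\in W_X\text{ and }x^*(x)>1-\delta\}.\]
Then $W$ is weakly open and contains $(x_0,0)$. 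For $(x,y)\in W$ the inequality $\|x\|\geq x^*(x)>1-\delta$ forces $\|y\|<\delta$, and picking $i$ with $\|x-x_i\|\leq r$ yields $\|(x,y)-(x_i,0)\|_1<r+\delta$. Letting $\delta\to 0^+$, $r\to\mathcal{T}(X)^+$, and interchanging the roles of $X$ and $Y$ gives $\mathcal{T}(X\oplus_1 Y)\leq\min\{\mathcal{T}(X),\mathcal{T}(Y)\}$.

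For part (\ref{thicpsum}), my plan is to exhibit a universal two-centre covering of the whole unit ball. Fix $x_0\in S_X$ and $y_0\in S_Y$, so that $(x_0,0),(0,y_0)\in S_{X\oplus_p Y}$, and for $(x,y)\in B_{X\oplus_p Y}$ set $d_1^p:=\|x-x_0\|^p+\|y\|^p$ and $d_2^p:=\|x\|^p+\|y-y_0\|^p$. The triangle inequality gives
\[d_1^p+d_2^p\leq(\|x\|+1)^p+(\|y\|+1)^p+\|x\|^p+\|y\|^p,\]
whose maximum subject to $\|x\|^p+\|y\|^p\leq 1$ and $\|x\|,\|y\|\geq 0$ is attained at the symmetric boundary point $\|x\|=\|y\|=2^{-1/p}$, where it evaluates to $(2^{1/p}+1)^p+1$. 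Combining with $\min\{d_1,d_2\}\leq((d_1^p+d_2^p)/2)^{1/p}$ shows that $B_{X\oplus_p Y}$ is covered by the two balls of radius $r:=\bigl(\tfrac{(2^{1/p}+1)^p+1}{2}\bigr)^{1/p}$ centred at $(x_0,0)$ and $(0,y_0)$; since $B_{X\oplus_p Y}$ is itself a non-empty weakly open subset of $B_{X\oplus_p Y}$, this gives $\mathcal{T}(X\oplus_p Y)\leq r$. The main technical obstacle across the three items is the peeling induction of part (\ref{thicinfisum}): one must check that, after every deletion of a centre with $\|x_i\|<1$, the remaining weakly open $W_Y$ stays non-empty, and this is precisely what the strict inequality $r<\mathcal{T}(Y)$ supplies.
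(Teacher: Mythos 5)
Your proof is correct, but in each item it takes a route that differs from the paper's. For item (\ref{thicinfisum}) the paper does not peel: it splits the index set into $A=\{i:\Vert x_i\Vert=1\}$ and $B=\{i:\Vert y_i\Vert=1\}$ (these cover $\{1,\dots,n\}$ because the centres lie in $S_{X\oplus_\infty Y}$), takes a product $U\times V\subseteq W$ of weakly open sets, and chooses in one stroke a point $x\in U$ at distance $\geq\mathcal T(X)-\varepsilon$ from all $x_i$ with $i\in A$ and a point $y\in V$ at distance $\geq\mathcal T(Y)-\varepsilon$ from all $y_i$ with $i\in B$; the max-norm then finishes. Your peeling induction is nevertheless sound: each deleted ball has centre in $S_Y$, so $r<\mathcal T(Y)$ keeps the residual weakly open set non-empty at every stage, and if every centre were peeled the empty cover of a non-empty set is already a contradiction (a case you should mention). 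Your ``moreover'' step is the paper's estimate read as an upper bound on $\mathcal T(X\oplus_\infty Y)$ rather than as a lower bound on $\mathcal T(X)$; both are fine. For item (\ref{thic1sum}) the paper reuses the very functionals defining $W_X$, replacing $\bigcap_i S(B_X,x_i^*,\alpha)$ by $\bigcap_i S(B_{X\oplus_1 Y},(x_i^*,0),\eta)$ with $\eta<\alpha$, which forces $\Vert y\Vert<\eta$ without ever needing a point of $W_X\cap S_X$; your extra norming functional achieves the same, but it obliges you to invoke that non-empty relatively weakly open subsets of $B_X$ meet $S_X$ in infinite dimensions and to dispose of the finite-dimensional case separately via denting points of $B_{X\oplus_1 Y}$ --- both facts are true, but each deserves its one-line justification. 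For item (\ref{thicpsum}) the paper simply quotes $\mathcal T\leq T$ together with the bound of Haller--Langemets; your explicit two-ball covering of the whole unit ball is a self-contained reproof of that cited result, and the only step left implicit is that the constrained maximum is attained at $\Vert x\Vert=\Vert y\Vert=2^{-1/p}$, which follows from the concavity of $t\mapsto(1+t^{1/p})^p$ on $[0,1]$.
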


\begin{proof}
(\ref{thicinfisum}). Consider a non-empty relatively weakly open subset $W$ of $B_{X\oplus_\infty Y}$, $\varepsilon>0$ and $(x_1,y_1),\ldots, (x_n,y_n)\in S_{X\oplus_\infty Y}$. Define $A:=\{i\in\{1,\ldots, n\}: \Vert x_i\Vert=1\}$ and $B:=\{i\in\{1,\ldots, n\}: \Vert y_i\Vert=1\}$. By \cite[Theorem 4.5]{aln} we can find non-empty weakly open sets $U$ of $B_X$ and $V$ of $B_Y$ such that $U\times V\subseteq W$. Now we can find $x\in U$ such that 
$$\Vert x-x_i\Vert\geq \mathcal T(X)-\varepsilon$$
holds for every $i\in A$. In a similar way we can find $y\in V$ such that
$$\Vert y_i-y\Vert\geq \mathcal T(Y)-\varepsilon$$
holds for every $i\in B$. Clearly $(x,y)\in U\times V\subseteq W$. Moreover, given $i\in\{1,\ldots, n\}=A\cup B$, it follows
$$\Vert (x,y)-(x_i,y_i)\Vert=\max\{\Vert x_i-x\Vert, \Vert y_i-y\Vert\}\geq \min\{\mathcal T(X)-\varepsilon, \mathcal T(Y)-\varepsilon \}.$$
Consequently we get $\mathcal T(X\oplus_\infty Y)\geq \min\{\mathcal T(X), \mathcal T(Y)\}$. For the converse inequality, assume that $\min\{\mathcal T(X),\mathcal T(Y)\}=\mathcal T(X)$ and that $\mathcal T(X\oplus_\infty Y)>1$ and pick $\varepsilon>0$ such that $\mathcal T(X\oplus_\infty Y)-\varepsilon>1$. 
Pick $x_1,\ldots, x_n\in S_X, \varepsilon>0$ and consider a non-empty relatively weakly open subset $W$ of $B_X$. Since $W\times B_Y$ is weakly open in $B_{X\oplus_\infty Y}$, there exists $(x,y)\in W\times B_Y$ such that
$$1<\mathcal T(X\oplus_\infty Y)-\varepsilon\leq \Vert (x_i,0)-(x,y)\Vert=\max\{\Vert x_i-x\Vert,\Vert y\Vert\}$$
holds for every $i\in\{1,\ldots, n\}$. Since $\Vert y\Vert\leq 1$ it follows that $\Vert x_i-x\Vert>\mathcal T(X\oplus_\infty Y)-\varepsilon$ holds for every $i\in\{1,\ldots, n\}$ and, as $x\in W$, we conclude that $\mathcal T(X)\geq \mathcal T(X\oplus_\infty Y)-\varepsilon$. Since $0<\varepsilon<\mathcal T(X\oplus_\infty Y)-1$ was arbitrary we conclude that $\mathcal T(X\oplus_\infty Y)=\min\{\mathcal T(X),\mathcal T(Y)\}$, so (\ref{thicinfisum}) is proved.

(\ref{thic1sum}). Consider $Z:=X\oplus_1 Y$ and assume with no loss of generality that $\min\{\mathcal T(X),\mathcal T(Y)\}=\mathcal T(X)$, and pick $\varepsilon>0$. Then there exists a basic non-emtpy relatively weakly open subset $W=\bigcap\limits_{i=1}^m S(B_X,x_i^*,\alpha)$ of $B_X$ and there are $x_1,\ldots, x_n\in S_X$ such that
$$W\subseteq \bigcup\limits_{i=1}^n B(x_i,\mathcal T(X)+\varepsilon).$$
Now, by the proof of \cite[Proposition 3.1]{abl}, taking $0<\eta<\alpha$ we conclude that $S(B_Z,(x_i^*,0),\eta)\subseteq S(B_X,x_i^*,\alpha)+\eta B_Y$ holds for every $i\in\{1,\ldots, m\}$. Consequently
$$\bigcap\limits_{i=1}^m S(B_Z,(x_i^*,0),\eta)\subseteq \bigcup\limits_{i=1}^n B(x_i,\mathcal T(X)+\varepsilon)\times \eta B_Y\subseteq \bigcup\limits_{i=1}^n B((x_i,0),\mathcal T(X)+\eta+\varepsilon).$$
Since $\alpha$ can be choosen to be arbitrarily small \cite[Lemma 2.1]{ik} we get that $\mathcal T(Z)\leq \mathcal T(X)$.

(\ref{thicpsum}) This follows because $\mathcal T(X\oplus_p Y)\leq T(X\oplus_p Y)\leq \left(\frac{(2^\frac{1}{p}+1)^p
1}{2}\right)^\frac{1}{p}$, where the last inequality was proved in \cite[Proposition 2.7]{hl} \end{proof}

\begin{example}
Let $X:=c_0$, $Y:=\mathbb R$ and $Z:=X\oplus_\infty Y$. Then $Z$ is isometrically isomorphic to $c_0$ and thus $\mathcal T(Z)=1>\min\{\mathcal T(X),\mathcal T(Y)\}=\mathcal T(\mathbb R)=0$. This proves that the inequality in (\ref{thicinfisum}) may be strict if we remove the assumption on $\mathcal T(X\oplus_\infty Y)$.
\end{example}

Let us exhibit now some results related to the index $\mathcal T$ with respect to subspaces.

\begin{proposition}\label{ideacasiso}
Let $X$ be a Banach space and $Y$ be an almost isometric ideal in $X$. Then $\mathcal T(X)\leq \mathcal T(Y)$.
\end{proposition}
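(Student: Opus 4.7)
The plan is to transfer a witness of $\mathcal T(Y) < r$ to a witness of $\mathcal T(X) \leq r + \varepsilon$ by extending the defining functionals via the Hahn--Banach extension operator associated to $Y$ as an ideal in $X$, and then using the almost isometric property to approximate an arbitrary element of the new weakly open set in $B_X$ by an element of the original one in $B_Y$.

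First I would fix $r > \mathcal T(Y)$ and pick a witness: $y_1, \ldots, y_n \in S_Y$ together with a non-empty weakly open set $W = \{y \in B_Y : y_k^*(y) > \beta_k,\ k=1,\ldots, m\} \subseteq \bigcup_{i=1}^n B(y_i, r)$, with $\|y_k^*\| = 1$. I would leave a small slack $\delta > 0$ so that $W_\delta := \{y \in B_Y : y_k^*(y) > \beta_k + \delta,\ k=1,\ldots, m\}$ is still non-empty. Because $Y$ is an ideal in $X$ (ai-ideal implies ideal), Theorem \ref{caraidehbope} would furnish a Hahn--Banach extension operator $\varphi : Y^* \to X^*$, and I would set $x_k^* := \varphi(y_k^*) \in S_{X^*}$. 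The candidate weakly open subset of $B_X$ would then be
$$V := \{x \in B_X : x_k^*(x) > \beta_k + \delta,\ k = 1, \ldots, m\},$$
which is non-empty (it contains $W_\delta$) and weakly open. The goal would be to show $V \subseteq \bigcup_{i=1}^n B(y_i, r + \varepsilon)$; since $y_i \in S_Y \subseteq S_X$, this would give $\mathcal T(X) \leq r + \varepsilon$, and letting $r \downarrow \mathcal T(Y)$ and $\varepsilon \downarrow 0$ would yield the desired inequality.

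For the inclusion, I would fix $x \in V$ and a small $\varepsilon_0 > 0$ and apply the ai-ideal property to $E := \linspan\{x, y_1, \ldots, y_n\}$, in the enhanced form that simultaneously matches the finitely many functionals $x_1^*, \ldots, x_m^*$ (an equivalent formulation of the ai-ideal condition via the extension operator $\varphi$, in the spirit of Theorem \ref{caraidehbope}). This would produce $T : E \to Y$ satisfying $T(y_i) = y_i$, $(1-\varepsilon_0)\|e\| \leq \|T(e)\| \leq (1+\varepsilon_0)\|e\|$, and $x_k^*(T(e)) = x_k^*(e)$. The point $\tilde y := T(x)/(1+\varepsilon_0) \in B_Y$ would then satisfy $y_k^*(\tilde y) = x_k^*(x)/(1+\varepsilon_0) > \beta_k$ for $\varepsilon_0$ small enough, so $\tilde y \in W$ and $\|\tilde y - y_i\| \leq r$ for some $i$. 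A routine chain of inequalities using $T(x - y_i) = T(x) - y_i$ together with the almost-isometry bounds of $T$ would then yield $\|x - y_i\| \leq r + \varepsilon$, as required.

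The main hurdle is the enhanced ai-ideal property: the bare definition in the paper only prescribes the almost-isometry and the fixing of $E \cap Y$, whereas the argument requires $T$ to additionally preserve the finitely many extended functionals $x_k^*$. Without this, the approximant $\tilde y$ would not necessarily lie in $W$, and the whole reduction would collapse. This strengthening should however be a standard consequence of the characterization of ai-ideals through Hahn--Banach extension operators in the sense of Theorem \ref{caraidehbope}, and should be accessible from the references cited in the excerpt.
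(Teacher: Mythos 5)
Your proposal is correct and follows essentially the same route as the paper: extend the defining functionals by the Hahn--Banach extension operator $\varphi$, and use the local almost-isometries $T:E\to Y$ that additionally satisfy $\varphi(y^*)(e)=y^*(T(e))$ for finitely many functionals to pull a point of the new weakly open set back into $W$ and transfer the covering. The ``enhanced'' ai-ideal property you flag as the main hurdle is exactly \cite[Theorem 1.4]{aln2}, which is what the paper invokes (and your extra normalisation $T(x)/(1+\varepsilon_0)$ to land in $B_Y$ is a sound, slightly more careful handling of a point the paper glosses over).
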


\begin{proof}
Pick a positive $\varepsilon>0$, a basic non-empty relatively weakly open subset $W=\bigcap\limits_{j=1}^m S(B_Y,y_j^*,\alpha_j)$ and $y_1,\ldots, y_n\in S_Y$ such that
$$W\subseteq\bigcup\limits_{i=1}^n B(y_i,\mathcal T(Y)+\varepsilon).$$
Consider by \cite[Theorem 1.4]{aln2} a Hahn-Banach extension operator $\varphi:Y^*\longrightarrow X^*$ such that, for all finite dimensional subspaces $E\subseteq X$ and $F\subseteq X^*$ there exists a linear and bounded operator $T:E\longrightarrow Y$ satisfying 
\begin{enumerate}
\item \label{aicondi1} $T(e)=e$ for all $e\in E\cap Y$.
\item \label{aicondi2} $(1+\varepsilon)^{-1}\Vert e\Vert\leq \Vert T(e)\Vert\leq (1+\varepsilon)\Vert T(e)\Vert$ holds for all $e\in E$.
\item \label{aicondi3} $\varphi(y^*)(e)=y^*(T(e))$ for all $e\in E, y^*\in F$.
\end{enumerate}
Let us prove that $U:=\bigcap\limits_{j=1}^m S(B_X,\varphi(y_j^*),\alpha_j)\subseteq \bigcup\limits_{i=1}^n B(y_i,\mathcal (1+\varepsilon)(T(Y)+\varepsilon))$ (notice that, since $W$ is non-empty, so is $U$). To this aim pick $x\in U$ and $\varepsilon>0$, define $E:=\linspan\{y_1,\ldots, y_n,x\}\subseteq X$ and $F:=\linspan\{y_1^*,\ldots, y_m^*\}\subseteq Y^*$ and consider the associated operator $T:E\longrightarrow Y$ satisfying (\ref{aicondi1}), (\ref{aicondi2}) and (\ref{aicondi3}). Now, given $j\in\{1,\ldots, m\}$, we have 
$$1-\alpha_j<\varphi(y_j^*)(x)=y_j^*(T(x)),$$
so $T(x)\in S$. Consequently there exists $i\in\{1,\ldots, n\}$ such that $\Vert T(x)-y_i\Vert\leq \mathcal T(Y)+\varepsilon$. Hence
$$\Vert x-y_i\Vert\leq (1+\varepsilon)\Vert T(x-y_i)\Vert=(1+\varepsilon)\Vert T(x)-y_i\Vert\leq (1+\varepsilon)(\mathcal T(Y)+\varepsilon),$$
which proves the desired inclusion and finishes the proof.
\end{proof}

\begin{remark}\label{ejereciaiideal}
Since every Banach space is an ai-ideal in its bidual, Example \ref{ejeextre} (\ref{ejec(K)}) shows that the inequality in the previous proposition may be strict.
\end{remark}

We will finish the section with another result related to the inheritance to subspaces inspirated in \cite[Theorem 2.2]{blr}.

\begin{proposition}\label{pasofinicodi}
Let $X$ be a Banach space and let $Y$ be a finite-codimensional subspace of $X$. Then $\mathcal T(Y)\geq \mathcal T(X)$.
\end{proposition}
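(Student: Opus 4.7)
The strategy is to show $\mathcal T(X)\leq \mathcal T(Y)$ by lifting a witness covering of a weakly open subset of $B_Y$ to a nearby basic relatively weakly open subset of $B_X$, exploiting that $\dim(X/Y)<\infty$ to control the deviation in the complementary directions.

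Fix $r>\mathcal T(Y)$ and choose $y_1,\ldots,y_n\in S_Y$ together with a basic non-empty relatively weakly open set
$$W=\bigcap_{j=1}^m S(B_Y,y_j^*,\alpha_j)\subseteq B_Y$$
with $W\subseteq \bigcup_{i=1}^n B(y_i,r)$. Normalising, I assume $\Vert y_j^*\Vert=1$, and by Hahn--Banach I pick extensions $x_j^*\in S_{X^*}$ of the $y_j^*$. Fix $y_0\in W$ and set $\varepsilon:=\min_j\bigl(y_j^*(y_0)-1+\alpha_j\bigr)>0$; this measures how deep $y_0$ lies inside $W$.

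Let $k:=\dim(X/Y)$, fix a basis $\phi_1,\ldots,\phi_k$ of $Y^\perp\subseteq X^*$, and choose vectors $e_1,\ldots,e_k\in X$ with $\phi_\ell(e_m)=\delta_{\ell m}$. Every $x\in X$ decomposes canonically as $x=p(x)+\sum_{\ell=1}^k \phi_\ell(x)e_\ell$ with $p(x)\in Y$ and $\Vert x-p(x)\Vert\leq C\max_\ell|\phi_\ell(x)|$, where $C:=k\max_\ell\Vert e_\ell\Vert$. For $\delta>0$ I then define
$$W':=\bigcap_{j=1}^m S\bigl(B_X,x_j^*,\alpha_j-\tfrac{\varepsilon}{2}\bigr)\cap\bigcap_{\ell=1}^k\{x\in B_X:|\phi_\ell(x)|<\delta\},$$
a relatively weakly open subset of $B_X$ which is non-empty because it contains $y_0$ (note $\phi_\ell(y_0)=0$ and $x_j^*(y_0)=y_j^*(y_0)\geq 1-\alpha_j+\varepsilon$).

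The core of the argument will be to check that, for all sufficiently small $\delta>0$ and every $x\in W'$, some $y_i$ satisfies $\Vert x-y_i\Vert\leq r+2C\delta$. Given $x\in W'$, put $y:=p(x)\in Y$, so that $\Vert x-y\Vert\leq C\delta$, $\Vert y\Vert\leq 1+C\delta$, and $y_j^*(y)=x_j^*(x)-x_j^*(x-y)>1-\alpha_j+\varepsilon/2-C\delta$ for every $j$. Setting $\tilde y:=y/\max(\Vert y\Vert,1)\in B_Y$, a short estimate shows that if $C\delta$ is small enough compared to $\varepsilon$ then $y_j^*(\tilde y)>1-\alpha_j$ for all $j$, so $\tilde y\in W$ and therefore $\tilde y\in B(y_i,r)$ for some $i$. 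Since $\Vert x-\tilde y\Vert\leq \Vert x-y\Vert+\bigl|\Vert y\Vert-1\bigr|\leq 2C\delta$, the triangle inequality gives $\Vert x-y_i\Vert\leq r+2C\delta$; as $y_i\in S_Y\subseteq S_X$, this yields $\mathcal T(X)\leq r+2C\delta$, and letting $\delta\to 0$ followed by $r\to \mathcal T(Y)$ finishes the proof.

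The delicate point, which I expect to be the main obstacle, is the renormalisation $y\mapsto \tilde y$: because $p(x)$ may have norm slightly exceeding $1$, one has to verify that dividing by $\Vert y\Vert$ does not push any value $y_j^*(\tilde y)$ below the threshold $1-\alpha_j$. This is precisely why the safety margin $\varepsilon/2$ is built into the slice widths defining $W'$, and why $\delta$ has to be chosen last, after $\varepsilon$ and the $\alpha_j$ are fixed.
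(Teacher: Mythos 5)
Your proof is correct and is essentially the paper's argument run in the contrapositive direction: where you transfer a finite covering of a weakly open subset of $B_Y$ up to a weakly open subset of $B_X$ (cut down by the constraints $|\phi_\ell(x)|<\delta$), the paper fixes the weakly open set in $B_Y$ together with finitely many points of $S_Y$ and produces a far point by pulling an element of a suitable weakly open subset of $B_X$ back into $Y$ via the quotient map and renormalizing. The technical ingredients --- Hahn--Banach extension of the defining functionals, exploiting $\dim(X/Y)<\infty$ to force proximity to $Y$, and the renormalization protected by a safety margin in the slice widths --- are the same in both.
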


\begin{proof}
Pick a weakly open set $W:=\{y\in Y:\vert y_i^*(y-y_0)\vert<\varepsilon\mbox{ for all }i\in\{1,\ldots, n\}\}$, where $n\in\mathbb N$, $y_1^*,\ldots, y_n^*\in S_{Y^*}$, $y_0\in S_Y$ and $\varepsilon>0$ satisfies that
$$W\cap B_Y\neq \emptyset$$
and pick $y_1,\ldots, y_k\in S_Y$ and $0<\delta<\varepsilon$. Let us find $z\in W\cap B_Y$ such that $\Vert y_i-z\Vert\geq \mathcal T(X)-\delta$ holds for every $i\in\{1,\ldots, k\}$. To this aim assume, up an application of Hahn-Banach theorem, that $y_i^*\in S_{X^*}$ holds for all $i\in\{1,\ldots, n\}$. Define
$$U:=\left\{x\in X:\vert y_i^*(x-y_0)\vert<\varepsilon-\frac{\delta}{4}\mbox{ for all }i\in\{1,\ldots, n\}\right\}.$$
Consider $p:X\longrightarrow X/Y$ to be the quotient map. Now $p(U)$ is a weakly open set of $X/Y$ which contains to $0$. Since $X/Y$ is finite-dimensional there exists a weakly open neighbourhood of $0$ $V$ such that $V\subseteq p(U)$ and that 
$$diam(V)<\frac{\delta}{16}.$$
Consider $B:=p^{-1}(V)\cap U\cap B_X$, which is a non-empty relatively weakly open subset of $B_X$. Since $y_1,\ldots, y_k\in S_Y\subseteq S_X$ we can find $x\in B$ such that
$$\Vert y_i-x\Vert>\mathcal T(X)-\frac{\delta}{16}$$
holds for every $i\in\{1,\ldots, k\}$. As $p(x)\in V$ and $diam(V)<\frac{\delta}{16}$ we can find $u\in Y$ such that $\Vert x-u\Vert<\frac{\delta}{16}$. Define $z:=\frac{u}{\Vert u\Vert}\in S_Y$ and notice that $\Vert x-z\Vert<\frac{\delta}{4}$. Moreover, given $j\in\{1,\ldots, n\}$, we get
$$\vert y_j^*(z-y_0)\vert\leq \vert y_j^*(x-y_0)\vert+\frac{\delta}{4}<\varepsilon,$$
so $z\in W$. Finally, given $i\in\{1,\ldots, k\}$, it follows 
$$\Vert y_i-z\Vert\geq\Vert y_i-x\Vert-\Vert x-z\Vert>\mathcal T(X)-\frac{\delta}{2}>\mathcal T(X)-\delta.$$
Since $\delta>0$ was arbitrary and by definition of the index $\mathcal T(Y)$ we conclude that $\mathcal T(Y)\geq\mathcal T(X)$, so we are done.
\end{proof}
\begin{remark}
The inequality in the previous proposition may be strict. Indeed, consider $Y:=L_1([0,1])$ and $X:=Y\oplus_1\mathbb R$. From Proposition \ref{esta1infi} we get that $\mathcal T(X)\leq \mathcal T(\mathbb R)=0$ while $\mathcal T(Y)=2$.
\end{remark}

\section{Some remarks and open questions}\label{secopeque}

\bigskip

In general, it is false that the property of being an $L$-embedded Banach space is hereditary (see \cite[Chapter IV]{hww}) and, up the best of our knowledge, it is not known whether an ideal in an $L$-embedded Banach space is itself an $L$-embedded Banach space (see \cite[p. 608]{rao}). However, for the class of those $L$-embedded Banach spaces for which every subspace which is an ideal is itself an $L$-embedded Banach space (e.g. von Neumann algebras (see the proof of \cite[Proposition 5]{rao})), the conclusion of Theorem \ref{tensoLembesepa} holds removing the separability assumption.

\begin{proposition}\label{teogenLsumandos}
Let $X$ be an $L$-embedded Banach space with the Daugavet property and let $Y$ be a non-zero Banach space. Assume that every ideal in $X$ is itself an $L$-embedded Banach space. If either $X^{**}$ or $Y$ has the metric approximation property, then $X\pten Y$ has the Daugavet property.
\end{proposition}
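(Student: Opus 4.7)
The plan is to reduce to the separable setting of Theorem \ref{tensoLembesepa} via the machinery of almost isometric ideals. Given any slice $S := S(B_{X\pten Y},G,\alpha)$ and any element $z = \sum_{i=1}^n x_i \otimes y_i \in X \pten Y$, first choose a norm-one elementary tensor $x \otimes y \in S \cap S_{X \pten Y}$. Form the finite-dimensional subspace $E := \linspan\{x, x_1, \ldots, x_n\} \subseteq X$, and invoke \cite[Theorem 1.5]{abrahamsen} to produce a separable almost isometric ideal $W$ of $X$ containing $E$.

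Two critical inheritance properties now converge on $W$. First, since $W$ is an ai-ideal of $X$ and $X$ enjoys the Daugavet property, $W$ itself has the Daugavet property by \cite[Proposition 3.8]{aln2}. Second, the standing hypothesis that every ideal in $X$ is $L$-embedded forces $W$ to be $L$-embedded. Hence $W$ is a separable $L$-embedded Banach space with the Daugavet property, exactly the setting of Theorem \ref{tensoLembesepa}, and under the MAP hypothesis (automatically inherited by the pair $(W,Y)$ when $Y$ carries the MAP) that theorem yields that $W \pten Y$ has the Daugavet property.

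It remains to transfer this back to $X \pten Y$. Because $W$ is an ai-ideal of $X$, the inclusion $W \pten Y \hookrightarrow X \pten Y$ is isometric and the functional $G$ restricts to a norm-at-most-one functional on $W \pten Y$ satisfying $G(x \otimes y) > 1 - \alpha$; thus $T := S(B_{W \pten Y}, G|_{W \pten Y}, \alpha)$ is a non-empty slice of $B_{W \pten Y}$ contained in $S$. Applying the slice characterisation of the Daugavet property (Theorem \ref{caradaugaclasi}) inside $W \pten Y$, for every $\varepsilon > 0$ we find $v \in T$ with $\Vert z + v \Vert_{W \pten Y} > 2 - \varepsilon$. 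Since the norms agree on $W \pten Y \subseteq X \pten Y$ and $v \in S$, the slice characterisation of the Daugavet property is verified in $X \pten Y$.

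The main obstacle is the metric approximation property condition in the case where only $X^{**}$, rather than $Y$, carries the MAP: nothing \emph{a priori} guarantees that $W^{**}$ inherits the MAP from $X^{**}$, so Theorem \ref{tensoLembesepa} cannot be invoked blindly for the pair $(W,Y)$. A remedy is to bypass the reduction and work directly at the $X$-level. Namely, $w^*$-density of $B_Z$ in $B_{X^{**}}$ (a consequence, via Krein--Milman, of $X$ having no extreme points in $B_X$ together with the $L$-summand decomposition $X^{**}=X\oplus_1 Z$) allows one to imitate the proof of Lemma \ref{estadauten1fac} by selecting $u \in S_Z$ outright, since the identity $\Vert x + \lambda u\Vert = \Vert x\Vert + \vert\lambda\vert$ is immediate from the $\oplus_1$ structure. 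The MAP of $X^{**}$ then provides the isometric embeddings $X_u \pten Y \hookrightarrow X^{**} \pten Y \hookrightarrow (X \pten Y)^{**}$, and the implication (\ref{teodaugabidusepa2})$\Rightarrow$(\ref{teodaugabidusepa1}) of Theorem \ref{teodaugabidusepa}, whose proof does not require separability, closes the argument.
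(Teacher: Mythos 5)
Your first three paragraphs reproduce the paper's own argument: pass to a separable ai-ideal $W\supseteq E$ via \cite[Theorem 1.5]{abrahamsen}, observe that $W$ inherits both the Daugavet property (as an ai-ideal) and $L$-embeddedness (from the standing hypothesis, since ai-ideals are in particular ideals), apply Theorem \ref{tensoLembesepa} to $W\pten Y$, and transfer the conclusion back through the isometric embedding $W\pten Y\hookrightarrow X\pten Y$. That part is correct and is exactly the route taken in the paper.

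Where you diverge is in the case that only $X^{**}$ has the metric approximation property, and here you go astray twice. First, the ``obstacle'' you flag is not one: since $W$ is an ideal in $X$, Theorem \ref{caraidehbope} provides a Hahn--Banach extension operator $W^*\longrightarrow X^*$, whose adjoint yields a norm-one projection of $X^{**}$ onto $W^{\perp\perp}\cong W^{**}$; the metric approximation property passes to ranges of norm-one projections, so $W^{**}$ does inherit the MAP from $X^{**}$ and Theorem \ref{tensoLembesepa} applies to the pair $(W,Y)$ in this case as well. This is precisely the one-line remark with which the paper closes that branch. Second, the remedy you substitute has a genuine gap: to run the argument of Lemma \ref{estadauten1fac} directly on the (possibly non-separable) space $X$ with $u\in S_Z$, you need $B_Z$ to be weak-star dense in $B_{X^{**}}$, and you derive this from the claim that the Daugavet property forces $B_X$ to have no extreme points. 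That implication is false in general (the unit ball of $\mathcal C([0,1])$ has extreme points, indeed preserved ones) and is not available for an arbitrary $L$-embedded space: the paper obtains the density of $B_Z$ only for separable $X$ (Theorem \ref{equiLembe}, which rests on Theorem \ref{teodaugabidusepa} and hence on separability) and for preduals of $JBW^*$-triples, where the absence of extreme points is a theorem of Becerra Guerrero and Mart\'in \cite{bm}. Indeed, if the density of $B_Z$ held for every $L$-embedded space with the Daugavet property, the extra hypothesis of the proposition would be superfluous and the paper's first open question would essentially be answered. So keep your main argument, but close the $X^{**}$-MAP case with the $1$-complementation observation rather than with the proposed detour.
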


\begin{proof} Pick $z:=\sum_{i=1}^n x_i\otimes y_i\in X\pten Y$ and consider a slice $S:=S(B_{X\pten Y},G,\alpha)$. Since $\Vert G\Vert=1$ we can find $x\otimes y\in S\cap S_X$. Define
$$E:=span\{x_1,\ldots, x_n, x\}\subseteq X.$$
Now $E$ is a finite-dimensional subspace of $X$. By \cite[Theorem 1.5]{abrahamsen} we can find an ai-ideal in $X$, say $W$, containing to $E$. Now notice that $\Vert T_{|W}\Vert\geq \Vert T(x)\Vert>1-\alpha$, so we can consider 
$$T:=\{z\in B_{W\pten Y}: G(z)>1-\alpha\},$$
which is a slice of $B_{W\pten Y}$. Moreover, since $W$ is an ai-ideal in $X$, we get that $z\in W\pten Y$ and that $\Vert z\Vert_{W\pten Y}=\Vert z\Vert_{X\pten Y}$. Furthermore, notice that $W^{**}$ has the metric approximation property whenever $X^{**}$ has the metric approximation property because $W^{\circ\circ}$ is $1$-complemented in $X^{**}$. Since $W$ is an $L$-embedded Banach space by the assumptions we conclude from Theorem \ref{tensoLembesepa} that $W\pten Y$ has the Daugavet property and, consequently, there exists $w\in T$ such that $\Vert z+w\Vert_{W\pten Y}>1+\Vert z\Vert-\varepsilon$.
Since $W\pten Y$ is an isometric subspace of $X\pten Y$ we conclude that
$$\Vert z+w\Vert_{X\pten Y}>1+\Vert z\Vert_{X\pten Y}-\varepsilon.$$
Moreover, since $w\in T$ we get that $w\in S$. Hence, $X\pten Y$ enjoys the Daugavet property, as desired.
\end{proof}

In view of the previous Proposition and the fact that, up the best of our knowledge, it is now know whether an ideal in an $L$-embedded Banach space is itself an $L$-embedded Banach space (see \cite[p. 608]{rao}), it is natural to pose the following question.

\begin{question}
Let $X$ be an $L$-embedded space with the Daugavet property and let $Y$ be a non-zero Banach space. If either $X^{**}$ or $Y$ has the metric approximation property, does $X\pten Y$ enjoy the Daugavet property?
\end{question}

It is known that $X\pten Y$ has, at least, an octahedral norm under the assumptions of the previous question \cite[Theorem 4.3]{llr2}.

With respect to Section \ref{secindithick}, in view of the characterisations given in Theorem \ref{caradaugaclasi} and of Proposition \ref{indipeqdau}, it is natural to wonder.
\begin{question}
Let $X$ be a Banach space and $T:X\longrightarrow X$ be a weakly compact operator. Is it true that $\Vert T+I\Vert=\max\{\mathcal T(X),\mathcal T_{w^*}(X^*)\}$?
\end{question}

\bigskip

\textbf{Acknowledgements}: I want to thank Luis Garc\'ia-Lirola for fruitful conversations which improved the paper.


\begin{thebibliography}{999999}

\bibitem {abrahamsen} T.~A. Abrahamsen, \textit{Linear extensions, almost isometries, and diameter two}, Extracta Math. \textbf{30}, 2 (2015), 135--151.

\bibitem {aln2}  T.~A. Abrahamsen, V. Lima and O. Nygaard, \textit{Almost isometric ideals in Banach spaces}, Glasgow Math. J. \textbf{56} (2014), 395--407.

\bibitem {aln} T. A. Abrahamsen, V. Lima and O. Nygaard, \textit{Remarks on diameter 2 properties}, J. Conv. Anal. \textbf{20}, 2 (2013), 439--452.


\bibitem {abl}
M.D. Acosta, J. Becerra Guerrero and G. L\'opez-P\'erez, \textit{Stability results on diameter two properties,} J. Conv. Anal., \textbf{22}, n. 1 (2015), 1-17.

\bibitem {blr} J. Becerra Guerrero, G. L\'opez-P\'erez and A. Rueda Zoca, \textit{Subspaces of Banach spaces with big slices}, Banach J. Math. Anal. \textbf{10}, 4 (2016), 771-782.

\bibitem {bm} J. Becerra Guerrero and M. Mart\'in, \textit{The Daugavet property of $C^*-$algebras, $JB^*-$triples, and of their isometric preduals}, J. Funct.
Anal. \textbf{224} (2005), 316--337.

\bibitem {br} J. Becerra Guerrero and A. Rodr\'iguez-Palacios, \textit{Banach spaces with the Daugavet property,
and the centralizer}, J. Funct. Anal. \textbf{254} (2008), 2294-2302.

\bibitem {bou} R.D.Bourgin, \textit{Geometric Aspects of Convex Sets with the Radon-Nikodym Property}, Springer-Verlag Berlin Heidelberg (1983).

\bibitem {cps} J. M.F. Castillo, P.L. Papini and M. Sim\~oes, \textit{Thick coverings for the unit ball of a Banach space}, Houston J. Math. \textbf{41} (1) (2015), 177-186.

\bibitem {dau} I.K. Daugavet, \textit{On a property of completely continuous operators in the space C}, Uspekhi Mat.
Nauk \textbf{18} (1963), 157-158 (Russian).

\bibitem {din} S. Dineen, The second dual  of a $JB^*$-triple  system.  In {\it
Complex Analysis, Functional Analysis and Approximation Theory}
(ed. by  J.  M\'{u}gica), 67-69, North-Holland Math. Stud. {\bf
125},  North-Holland,  Amsterdam-New  York, 1986.

\bibitem {hww} P. Harmand, D. Werner and W. Werner, \textit{$M$-ideals in Banach spaces and Banach algebras}, Lecture Notes
in Mathematics, vol. 1547, Springer, Berlin, 1993.

\bibitem {ik} Y. Ivakhno and V. Kadets, \textit{Unconditional sums of spaces with bad projections}, (2006),
Available at ResearchGate.

\bibitem {god} G. Godefroy, {\it Metric characterization of first Baire class
linear forms and octahedral norms,} Studia Math. {\bf 95} (1989),
1-15.

\bibitem {gk} G. Godefroy and N.~J.~Kalton, \textit{The ball topology and its applications}, Contemporary
Math. \textbf{85} (1989), 195--238.

\bibitem{gks}
G.~Godefroy, N.~J. Kalton and P.~D. Saphar, \emph{Unconditional ideals in
  {Banach} spaces}, Studia Math. \textbf{104} (1993), 13--59.

\bibitem {hl} R. Haller and J. Langemets,\textit{ Geometry of Banach spaces with an octahedral
norm}, Acta Comment. Univ. Tartu. Math. \textbf{18} (2014), 125-133.

\bibitem {ikr} J.M. Isidro, W. Kaup and A. Rodr\'iguez-Palacios, \textit{On real forms of $JB^*$-triples}, Manuscripta Math. \textbf{86} (1995), 311-335.

\bibitem {ikw} Y. Ivakhno, V. Kadets and D. Werner, \textit{The Daugavet property for spaces of Lipschitz functions}, Math. Scand. \textbf{101} (2007), 261-279.


\bibitem {kkw} V.~Kadets, N.~Kalton and D.~Werner, \emph{Remarks on
    rich subspaces of Banach spaces}, Studia Math. \textbf{159} (2003), no.~2, 195--206.

\bibitem {kssw} V. Kadets, R. Shvidkoy, G. Sirotkin and D. Werner, \textit{Banach spaces with the Daugavet property}, Trans. Am. Math. Soc. \textbf{352}, No.2 (2000), 855--873.

\bibitem {llr2} J. Langemets, V. Lima and A. Rueda Zoca, \textit{Octahedrality in tensor products of Banach spaces}, preprint, available at
  \href{https://arxiv.org/abs/1609.02062}{arXiv:1609.02062}.

\bibitem {mp} J. Mart\'inez and A. M. Peralta, {\it
Separate weak$^*$-continuity of the triple product in dual real
$JB^*$-triples},  Math. Z. {\bf 234} (2000), 635-646.

\bibitem {rao} T.S.S.R.K Rao, \textit{On ideals in Banach spaces}, Rocky Mountain of Math. \textbf{31}, 4 (2001), 595-609.

\bibitem {rya} R.~A.~Ryan, \emph{Introduction to tensor products of Banach spaces}, Springer Monographs
in Mathematics, Springer-Verlag, London, 2002.


\bibitem {shi} R. V. Shvidkoy, \textit{Geometric aspects of the Daugavet property}, J. Funct. Anal. \textbf{176} (2000),
198-212.

\bibitem {wer} D. Werner, \textit{Recent progress on the Daugavet property}, Irish Math. Soc. Bull. \textbf{46} (2001) 77-97.


\bibitem {whit} R. Whitley, \textit{The size of the unit sphere}, Canad. J. Math. \textbf{20} (1968), 450-455.

\end{thebibliography}
\end{document}